 \newcommand{\QQ}{\mathbf{Q}} 
\newcommand{\ZZ}{\mathbf{Z}} 
\newcommand{\1}{{\mathbf{1}}}
\newcommand{\et}{\mathrm{\acute{e}t}}
\newcommand{\id}{\mathrm{id}}
\newcommand{\an}{\mathrm{an}}
\newcommand{\antr}{\mathrm{a.t.}}
\newcommand{\eff}{\mathrm{eff}}
\newcommand{\cst}{\mathrm{Artin}}
\DeclareMathOperator{\rk}{rk}
\DeclareMathOperator{\GL}{GL}
\DeclareMathOperator{\Br}{Br}
\DeclareMathOperator{\Gal}{Gal}
\DeclareMathOperator{\Hom}{Hom}
\DeclareMathOperator{\End}{End}
\DeclareMathOperator{\M}{M}
\newcommand{\Han}{H_{\an}}
\newcommand{\tM}{{t\mathcal{M}}}
\newcommand{\IHom}{{\mathcal Hom}}
\newcommand{\Kdagt}{{K^\dagger\mspace{-1.5mu}\{t\}}}
\newcommand{\Kdagtt}{{K^\dagger\mspace{-1mu}(\{t\})}}
\newcommand\isbydef{\stackrel{\text{\tiny def}}{=}}
\newcommand\dash{\nobreakdash-\hspace{0pt}}
\theoremstyle{plain}
\newtheorem{theorem}[subsubsection]{Theorem}
\newtheorem{corollary}[subsubsection]{Corollary}
\newtheorem{lemma}[subsubsection]{Lemma}
\newtheorem{proposition}[subsubsection]{Proposition}
\theoremstyle{definition}
\newtheorem{definition}[subsubsection]{Definition}
\newtheorem{example}[subsubsection]{Example}
\newtheorem{remark}[subsubsection]{Remark}
\newtheorem{remarks}[subsubsection]{Remarks}
\newtheorem{void}[subsubsection]{}
\begin{document}

\markboth{Lenny Taelman}
 {Artin $t$-Motifs}

\title
 {Artin $t$-Motifs}

\author{Lenny Taelman}
\thanks{Supported by a grant of the Netherlands Organisation for Scientific
Research (NWO)}

\begin{abstract}
 We show that analytically trivial $t$\dash motifs satisfy a Tannakian duality, without restrictions
 on the base field, save for that it be of generic characteristic. We show that the group of
 components of the t-motivic Galois group coincides with the absolute Galois group of the base field.
\end{abstract}

\maketitle

\section{Introduction}

Let $k$ be a finite field with $q$ elements and denote by
$k[t]$ the polynomial ring in one variable $t$ over $k$.
Let $K$ be a field containing $k$ and $\theta\in K$ a
distinguished element. Write $\tau$ for the endomorphism
`Frobenius $\otimes$ identity' of $K[t]=K\otimes_k k[t]$,
in other words: $\tau(\sum a_it^i)=\sum a_i^qt^i$.
In \cite{Anderson86} Anderson defined a $t$\dash motive
to be a pair $(M,\sigma)$ consisting of a $K[t]$\dash module
$M$ and a function $\sigma:M\to M$ subject to the conditions that
\begin{enumerate}
 \item[i.] $M$ is free and finitely generated over $K[t]$;
 \item[ii.] $\sigma$ is semi-linear with respect to $\tau$;
          ($\sigma(m_1+m_2)=\sigma(m_1)+\sigma(m_2)$ and
           $\sigma(fm)=\tau(f)\sigma(m)$ for all
           $m,m_1,m_2 \in M$ and $f\in K[t]$);
 \item[iii.] the determinant of $\sigma$ with respect to one
             (equivalently: any) $K[t]$\dash basis of $M$ vanishes
             only at $t=\theta$;
 \item[iv.] there exists a finite set $S\subset M$
           such that $\cup_n \sigma^n(S)$ spans $M$ as a 
           $K$\dash vector space. ($M$ is
           `finitely generated over $K[\sigma]$');
\end{enumerate}

He showed in \emph{loc. cit.} that the category of $t$\dash motives
contains the (opposite) category of Drinfeld modules as a full
subcategory. But there are plenty of interesting
$t$\dash motives that are not Drinfeld modules: for example,
the category of $t$-motives is closed under direct sums
and tensor products while the subcategory of Drinfeld modules
is not closed under either operation.

Recently, Papanikolas has shown \cite{Papanikolas08} that
\emph{analytically trivial} (or \emph{uniformisable})
$t$\dash motifs (\emph{see} \S \ref{thetmotivicgal} for the
definition) satisfy a Tannakian duality, at least when $K$ is
algebraically closed. For the transcendence applications of
\emph{loc. cit.} it is of course sufficient to work over an
algebraically closed field,  but there is also a very arithmetic
flavour to $t$\dash motifs and it is therefore desirable to have a
Tannakian duality over any base field $K$. 

In this paper paper we show that such a duality indeed holds
over general $K$: that a suitable category of $t$\dash motifs over
$K$ is equivalent to the category of representations of some
affine group scheme $\Gamma_K$ over $k(t)$ (the subscript $K$
is present only to denote dependency on $K$.)
(\S \ref{constructingthe} and \S \ref{thetmotivicgal})

We follow \emph{loc. cit.} closely, save in two instances
where we simplify the construction a bit:
\begin{enumerate}
 \item[i.] the closure under internal hom is effected by
           a formal inversion of the `Carlitz twist';
 \item[ii.] there is no condition
            `finitely generated over $K[\sigma]$'
            in the definitions.
\end{enumerate}

The first one is of course completely analogous to
the inversion of the Tate twist in Algebraic Geometry.
Some form of such inversion in the function field context
is already present in \cite{Taguchi95}.

The second one is justified in \S \ref{subsecfingen}, where
we show that any $t$\dash motif (in our terminology) becomes
finitely generated over $K[\sigma]$ after a suitable Tate twist. 
Hence the resulting Tannakian category coincides with the one
generated by those $t$\dash motifs that are finitely
generated over $K[\sigma]$.

Having established a Tannakian duality for $t$\dash motifs
over a general $K$, we show that the group of components
of $\Gamma_K$ is isomorphic with the
absolute Galois group of $K$. (In particular we obtain that
when $K$ is separably closed, the fundamental group $\Gamma_K$
is connected.) This is obtained by a careful analysis
of those $t$\dash motifs that are trivialised by a 
finite separable extension of $K$. This class of $t$\dash motifs
forms the proper analogue of the so-called Artin motifs
in Algebraic Geometry; this is why we refer to them 
as \emph{Artin $t$\dash motifs}. (\S \ref{artintmotifs}
and \S \ref{concomp}) 

\subsection{Notation}
A few words on notation are in order. Let $R$ be a ring and
$\tau:R\to R$ an endomorphism of $R$. An
additive function $\sigma$ between two left $R$\dash modules is said to be semi-linear with respect to $\tau$ if it satisfies the identity
$\sigma(rm)=\tau(r)\sigma(m)$. In this note (almost) all
semi-linear functions are denoted by the Greek letter $\sigma$, and the different endomorphisms according to which they are semi-linear are all denoted by $\tau$. This should not lead to confusion.

\section{Constructing the Category of {\it t}-Motifs}
\label{constructingthe}

\subsection{Effective {\it t}-Motifs}
 Let $k$ be a finite field of $q$ elements and $K$ a field
 containing $k$. Fix a homomorphism $k[t]\to K$  of
 $k$\dash algebras and denote the image of $t$ by $\theta$.
 We do not demand that $k[t]\to K$ be injective now.
 We shall frequently refer to
 `the field $K$', this is silently understood
 to contain the structure homomorphism $k[t]\to K$.
 
 Denote by $\tau$ the endomorphism of $K[t]$ determined by
 $\tau(x)=x^q$ for all $x\in K$ and $\tau(t)=t$.
 The following definition
 goes back to \cite{Anderson86}, although here a 
  slightly less restrictive form is used.
 \begin{definition}
  An \emph{effective $t$-motif} of rank $r$ over $K$ 
  is a pair $M=(M,\sigma)$ consisting of
  \begin{itemize}
   \item a free and finitely generated $K[t]$\dash module $M$
         of rank $r$, and,
   \item a map $\sigma:M\to M$ satisfying
         $\sigma(fm)=\tau(f)\sigma(m)$ for all $f\in K[t]$ and
         $m\in M$,
  \end{itemize}
  such that the determinant of $\sigma$ with respect to some
  (and hence any) $K[t]$\dash basis of $M$ is a power of $t-\theta$
  up to a unit in $K$.
 \end{definition}
 A morphism of effective $t$-motifs is a morphism of $K[t]$\dash
 modules making the obvious square commute. The group of
 morphisms is denoted $\Hom_\sigma(M_1,M_2)$.
 The resulting category of effective $t$\dash motifs over $K$ is
 denoted by $\tM_\eff(K)$ or simply by $\tM_\eff$.

\begin{example}\label{carlitz}
 For any field $k[t]\to K$, the pair 
 \[ C\isbydef K[t] e\ \text{ with }\
    \sigma(fe)\isbydef \tau(f)(t-\theta) e \]
 is an effective $t$\dash motif
 and we will call it the \emph{Carlitz $t$\dash motif}.
 This is the function field counterpart to the
 Lefschetz motif.
\end{example}

\begin{void}
 Define the \emph{tensor product} of two effective $t$\dash
 motifs as
 \[ M_1\otimes M_2 \isbydef M_1\otimes_{K[t]} M_2 \
    \text{ with }\
    \sigma(m_1\otimes m_2)\isbydef\sigma(m_1)\otimes \sigma(m_2). \]
 This is again an effective $t$\dash motif. 
 
 The pair $(K[t],\tau)$  is an
 effective $t$\dash motif which we shall denote
 $\1$. We call it  the \emph{unit $t$\dash motif},
 since for every $M$, one has
 natural isomorphisms $M\otimes \1 = M $ and $\1 \otimes M=M$.
\end{void}

\begin{remark}
 We shall follow a convention sometimes used in representation
 theory and write $nM$ for the direct sum of $n$ copies of $M$
 and $M^n$ for the $n$\dash fold tensor power of $M$.
\end{remark}

 If $M_1$ and $M_2$ are effective $t$\dash motifs then
 $\Hom(M_1,M_2)$ is naturally a $k[t]$\dash module. 
 We have the following \cite[Thm 2]{Anderson86}
\begin{proposition}
  $\Hom(M_1,M_2)$ is free and finitely generated over $k[t]$.
  \qed
\end{proposition}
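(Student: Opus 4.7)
The goal is to show two things: $\Hom_\sigma(M_1,M_2)$ is finitely generated and torsion-free as a $k[t]$-module. Since $k[t]$ is a principal ideal domain, this yields freeness. Torsion-freeness is immediate, because $\Hom_\sigma(M_1,M_2)$ embeds into the free $K[t]$-module $\Hom_{K[t]}(M_1,M_2)$ of rank $r_1r_2$, which is \emph{a fortiori} torsion-free over the subring $k[t]\subset K[t]$.

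For finite generation I would translate the problem into semi-linear algebra. Fix $K[t]$-bases of $M_1$ and $M_2$ and write $\sigma_i(v)=\Phi_i\tau(v)$ for matrices $\Phi_i$; a morphism then corresponds to an $r_2\times r_1$ matrix $A$ over $K[t]$ solving the equation
\[
A\Phi_1=\Phi_2\tau(A),
\]
which is $k[t]$-linear in $A$ because $\tau$ fixes $k[t]$. The determinantal condition makes $\Phi_i$ invertible over $L=K[t][(t-\theta)^{-1}]$, so the equation can be rewritten as a $\tau$-semi-linear recurrence $\tau(A)=\Phi_2^{-1}A\Phi_1$. Filter the solution set by degree, $N_n=\{A\in\Hom_\sigma(M_1,M_2):\deg_t A\le n\}$. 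The crux is to bound the slices $\dim_k(N_n/N_{n-1})$ uniformly by $r_1r_2$. Comparing top-degree coefficients on both sides of the matrix equation yields, after normalization, a $\tau$-semi-linear equation on the leading coefficient $A_n$; the classical fact that the fixed set of a Frobenius-semi-linear endomorphism of a finite-dimensional $K$-vector space has $k$-dimension at most its $K$-dimension supplies the bound. The polynomial growth $\dim_k N_n\le r_1r_2(n+1)$, combined with a recursive reduction of degree using the recurrence, shows that any lift of a $k$-basis of $N_D$ (for $D$ sufficiently large relative to the degrees of $\Phi_1$ and $\Phi_2$) generates $\Hom_\sigma(M_1,M_2)$ as a $k[t]$-module.

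The main obstacle is the uniform slice bound. The naive top-degree comparison of $A\Phi_1$ and $\Phi_2\tau(A)$ is only clean when $\Phi_1$ and $\Phi_2$ have matching entry degrees and invertible leading matrices; in general one first normalizes by a change of basis or a Carlitz twist. A secondary technical point is that the Lang-style bound on semi-linear fixed points is cleanest over a separably closed field, so one passes to $K^\sep$ and descends by Galois invariance. It is precisely the determinantal condition that makes this work: it localizes the failure of $\sigma$ to be an isomorphism at $t=\theta$, so that after inverting $t-\theta$ the usual semi-linear formalism applies.
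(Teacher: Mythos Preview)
The paper does not give its own argument: the proposition is stated with a bare \qed and a citation to Anderson's Theorem~2. So there is no detailed proof to match against, only the standard argument behind that reference.

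Your torsion-freeness step is fine, but you stop one observation short of the decisive one. Since $K[t]=K\otimes_k k[t]$ is \emph{free} as a $k[t]$-module (any $k$-basis of $K$ serves as a $k[t]$-basis), the ambient module $\Hom_{K[t]}(M_1,M_2)\cong K[t]^{r_1r_2}$ is free over $k[t]$, not merely torsion-free. Over a PID every submodule of a free module is free, so $\Hom_\sigma(M_1,M_2)$ is already free over $k[t]$ before any further work. What remains is only to bound its rank.

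The rank bound is where your plan diverges from the standard one and where it has a genuine gap. You want the top-degree comparison in $A\Phi_1=\Phi_2\tau(A)$ to produce a $\tau$-semilinear equation on the leading matrix $A_n$ alone, but in general the coefficient of $t^{n+d}$ involves several $A_i$'s with $i<n$ (whenever $\Phi_1,\Phi_2$ have positive degree in $t$, which the determinantal condition forces as soon as either motif is nontrivial). Your proposed ``normalisation by a change of basis or a Carlitz twist'' cannot remove this: a $K[t]$-basis change replaces $\Phi_i$ by $P_i^{-1}\Phi_i\tau(P_i)$ and cannot lower $\deg_t\det\Phi_i$, while a simultaneous Carlitz twist of $M_1$ and $M_2$ leaves the matrix equation literally unchanged. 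So the semilinear endomorphism of a finite-dimensional $K$-space whose fixed set would be $N_n/N_{n-1}$ is never actually produced, and the uniform slice bound is left unproved.

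The argument behind Anderson's theorem is shorter and sidesteps all of this. One shows directly that $k[t]$-linearly independent elements of $\Hom_\sigma(M_1,M_2)$ remain $K[t]$-linearly independent inside $\Hom_{K[t]}(M_1,M_2)$: from a relation $\sum a_if_i=0$ with $a_i\in K[t]$ one applies $\sigma_2$ and uses that $K[t]\cdot\sigma_1(M_1)\supset (t-\theta)^{d_1}M_1$ (the determinantal condition) together with torsion-freeness of $M_2$ to deduce $\sum\tau(a_i)f_i=0$; a shortest-relation argument then forces $a_i\in K[t]^\tau=k[t]$. Hence the $k[t]$-rank is at most $r_1r_2$, and combined with freeness this gives finitely generated and free in one stroke. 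This is the route you should take; the degree filtration is unnecessary.
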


\subsection{Duality}

\begin{void}
 Let $M_1$ and $M_2$ be effective $t$\dash motifs over $K$.
 Inspired by the theory of linear representations of groups
 we could try to assign to $M_1$ and $M_2$ an effective
 $t$\dash motif of internal homomorphisms as
  \[ \IHom(M_1,M_2)\isbydef\Hom_{K[t]}(M_1,M_2)\
        \text{ with }\
     \sigma(f)\isbydef\sigma\circ f \circ \sigma^{-1}, \]
 where $\sigma\circ f \circ \sigma^{-1}$ is to be read as
 $\sigma_2\circ f \circ \sigma_1^{-1}$.
 This does, however, not make sense, since $\sigma_1$ need 
 not be invertible. First of all, $K$ need not be perfect,
 and secondly---more seriously---the determinant of $\sigma_1$
 is $(t-\theta)^d$ up to a constant, and hence not
 invertible if  $d>0$.
\end{void}

\begin{void}\label{inthom}
 This can be partially resolved. Write $K^a$ for some algebraic
 closure of $K$. Note that after extension of
 scalars from $K[t]$ to $K^a(t)$ the induced action of
 $\sigma$ on $M_1\otimes_{K[t]}K^a(t)$ \emph{is} invertible.
 \begin{proposition}
  For $n$ sufficiently large, the subgroup
  \[\Hom_{K[t]}\hspace{-2pt}\big(M_1,
        M_2\otimes C^{n}\big)\subset
      \Hom_{K^a(t)}\hspace{-2pt}\big(M_1\otimes K^a(t),
                 M_2\otimes C^{n}\otimes K^a(t)\big) \]
  is stable under
   $ f \mapsto \sigma \circ f \circ \sigma^{-1}$. 
 \end{proposition}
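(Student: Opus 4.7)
The plan is to reduce the claim to a short matrix calculation. I would first choose $K[t]$\dash bases for $M_1$ and $M_2$, of ranks $r_1$ and $r_2$, and let $A_1 \in \M_{r_1}(K[t])$ and $A_2 \in \M_{r_2}(K[t])$ denote the matrices representing $\sigma_1$ and $\sigma_2$. The defining condition of an effective $t$\dash motif then reads $\det A_1 = u \cdot (t-\theta)^d$ for some $u \in K^\times$ and integer $d \geq 0$. On $M_2 \otimes C^n$ the twisted $\sigma$ has matrix $(t-\theta)^n A_2$, while a $K[t]$\dash linear homomorphism $f \colon M_1 \to M_2 \otimes C^n$ is given by an $r_2 \times r_1$ matrix $F$ with entries in $K[t]$.

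Next I would base-change everything to $K^a(t)$, in which $\tau$ becomes invertible (as $K^a$ is perfect) and $A_1$ becomes invertible (as $\det A_1$ is a unit in $K^a(t)$), so that $\sigma_1^{-1}$ genuinely makes sense on $M_1 \otimes K^a(t)$. Unwinding the semi-linearity convention --- $\sigma_1$ sends a vector with coordinate column $x$ to the one with column $A_1 \tau(x)$, so $\sigma_1^{-1}(e_j)$ has column $c_j$ determined by $\tau(c_j) = A_1^{-1} \delta_j$ --- a short direct computation should show that $g := \sigma \circ f \circ \sigma^{-1}$ is represented by the matrix
\[
G \;=\; (t-\theta)^n \, A_2 \, \tau(F) \, A_1^{-1}.
\]

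The key step, and the whole point of twisting by $C^n$, is to verify that $G$ has entries in $K[t]$ once $n$ is taken large enough. This should follow immediately from Cramer's rule: $A_1^{-1} = u^{-1}(t-\theta)^{-d} \operatorname{adj}(A_1)$ has entries in $(t-\theta)^{-d} K[t]$, so $(t-\theta)^n A_1^{-1}$ lies in $\M_{r_1}(K[t])$ as soon as $n \geq d$. Combined with $A_2$ and $\tau(F)$ already having entries in $K[t]$, this forces $G$ to have entries in $K[t]$ for such $n$, and hence $g$ defines a bona fide $K[t]$\dash linear map $M_1 \to M_2 \otimes C^n$. I do not foresee any serious obstacle beyond the bookkeeping of the semi-linear conventions: the entire content is the observation that the denominators introduced by inverting $\sigma_1$ are supported at $t = \theta$ and can be cleared by a sufficiently high Carlitz twist.
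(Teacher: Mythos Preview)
Your argument is correct and is essentially identical to the paper's own proof: both choose bases, compute that $\sigma\circ f\circ\sigma^{-1}$ corresponds to $F\mapsto (t-\theta)^n A_2\,\tau(F)\,A_1^{-1}$, and then observe that $(t-\theta)^n A_1^{-1}$ has entries in $K[t]$ for $n$ large because $\det A_1$ is a unit times a power of $t-\theta$. Your version is merely a bit more explicit (invoking Cramer's rule and the bound $n\geq d$), but there is no substantive difference.
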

 
 \begin{proof}
  Choose bases and express $\sigma$ on $M_1$ and $M_2$ by
  matrices $S_1$ and $S_2$ respectively. 
  Then $M_2\otimes C^n$ has
  a basis on which $\sigma$ is expressed by the
  matrix $(t-\theta)^n S_2$.
  The map $f\mapsto \sigma\circ f\circ \sigma^{-1}$
  translates to a map
   \[ \M(r_2\times r_1,K^a(t))\to
      \M(r_2\times r_1,K^a(t)): F\mapsto F' \]
  with
  \begin{align}
             F' &= (t-\theta)^n S_2
                   \tau(F \tau^{-1}(S_1^{-1}))
                   \notag\\
                &= (t-\theta)^n S_2 \tau(F) S_1^{-1}
                   \notag
  \end{align}
  The Proposition claims that $\M(r_2\times r_1,K[t])$ is
  mapped into itself. But since the determinant of $S_1$ is
  a power of $(t-\theta)$, the matrix $(t-\theta)^n S_1^{-1}$
  has entries in $K[t]$ when $n$ is sufficiently large.
  This immediately implies that $\M(r_2\times r_1,K[t])$ is
  mapped into itself.
  \end{proof}
\end{void}
\begin{void}
  It follows from the proof that
  $\sigma(f)\isbydef \sigma\circ f\circ \sigma^{-1}$
  induces the structure of an effective $t$\dash motif on
  $\Hom_{K[t]}(M_1,M_2\otimes C^n)$ for large $n$.
  We shall denote it by $\IHom(M_1,M_2\otimes C^n)$.
  These internal homs are stable for growing $n$ in
  the sense that there are natural isomorphisms
  \begin{equation}\label{inthomstab}
   \IHom(M_1,M_2\otimes C^n)\otimes C\
      \to\
      \IHom(M_1,M_2\otimes C^{n+1})
  \end{equation}
  relating them.
\end{void}

\subsection{Carlitz Twist and {\it t}-Motifs}

 The previous section hints that the
 obstruction to having internal homs will be lifted
 as soon as the Carlitz $t$\dash motif is made
 invertible. (Very reminiscent of the inversion of the
 Lefschetz motif in the construction of the category of pure
 motifs: If $X$ is a smooth and projective variety of dimension
 $d$ then $\ell$\dash adic Poincar\'{e} duality defines a
 perfect pairing
 \[ H^i_\et(X_{K^s},\QQ_\ell)\times
    H^{2d-i}_\et(X_{K^s},\QQ_\ell)
    \to \QQ_\ell(-d) \]
 which suggests that the motif $h^i(X,\QQ)$ is dual to
 $h^{2d-i}(X,\QQ)$ shifted by the $d$\dash th power
 of the Lefschetz motif, see \cite[\S 4.1]{Andre04}.)
 This can be done quite
 easily, because of the following Lemma, whose verification
 is straightforward.
 \begin{lemma}
  If $M_1$ and $M_2$ are effective $t$\dash motifs, then 
  the natural map
  \[ \Hom_\sigma(M_1,\,M_2)\ \to\
     \Hom_\sigma(M_1\otimes C^{n},\,
                        M_2\otimes C^{n}) \]
  that takes $f$ to $f\otimes \id$ is an isomorphism.\qed
 \end{lemma}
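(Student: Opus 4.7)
The plan is to observe that $C^n$ is a free $K[t]$\dash module of rank one, with distinguished basis $e^{\otimes n}$, on which $\sigma$ acts by multiplication by $(t-\theta)^n$. The lemma therefore reduces to a computation with the single element $e^{\otimes n}$.

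More concretely, I would first identify $M\otimes C^n$ with $M$ as a $K[t]$\dash module via $m\mapsto m\otimes e^{\otimes n}$, for $M=M_1$ and $M=M_2$ in turn. Under this identification the semi-linear map on $M\otimes C^n$ becomes $m\mapsto (t-\theta)^n\sigma_M(m)$. Any $K[t]$\dash linear map $g:M_1\otimes C^n\to M_2\otimes C^n$ corresponds bijectively to a $K[t]$\dash linear map $f:M_1\to M_2$, and the map of the lemma is, on the level of underlying $K[t]$\dash linear maps, just the identity. So injectivity is immediate.

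For surjectivity I would take $g$ that commutes with $\sigma$, write $g=f\otimes\id$ as above, and chase a general $m\otimes e^{\otimes n}$ around the square. On one side one gets
\[ \sigma\bigl(g(m\otimes e^{\otimes n})\bigr)=\sigma(f(m))\otimes (t-\theta)^n e^{\otimes n}; \]
on the other side
\[ g\bigl(\sigma(m\otimes e^{\otimes n})\bigr)=g\bigl(\sigma(m)\otimes(t-\theta)^n e^{\otimes n}\bigr)=(t-\theta)^n f(\sigma(m))\otimes e^{\otimes n}. \]
The hypothesis on $g$ gives $(t-\theta)^n\sigma(f(m))=(t-\theta)^n f(\sigma(m))$ inside $M_2$, and since $K[t]$ is a domain and $M_2$ is free over $K[t]$, multiplication by $(t-\theta)^n$ is injective on $M_2$. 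Thus $f\sigma=\sigma f$, i.e.\ $f\in\Hom_\sigma(M_1,M_2)$.

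There is no genuine obstacle here; the only two points to keep in mind are that $C^n$ is literally free of rank one (so the functor $-\otimes C^n$ is on underlying $K[t]$\dash modules the identity up to a basis change) and that $(t-\theta)^n$ is a non\dash zero\dash divisor on the free $K[t]$\dash module $M_2$, which justifies cancelling it from the semi-linearity equation.
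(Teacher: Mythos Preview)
Your argument is correct and is exactly the ``straightforward verification'' the paper leaves to the reader (the lemma is stated with a \qed\ and no proof). The only ingredients are that $C^n$ is free of rank one on $e^{\otimes n}$ with $\sigma(e^{\otimes n})=(t-\theta)^n e^{\otimes n}$, and that $(t-\theta)^n$ is a non-zero-divisor on $M_2$; you have used both correctly.
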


 We are now ready to make the following definition.
 \begin{definition}
  A \emph{$t$\dash motif} is a pair $(M,i)$ consisting of
  an effective $t$\dash motif $M$  and an integer
  $i\in \ZZ$. \emph{Morphisms} between
  $t$\dash motifs are defined by
   \[\Hom_\sigma\hspace{-2pt}\big((M_1,i_1),(M_2,i_2)\big)\
     \isbydef\
  \Hom_{\sigma}\hspace{-2pt}\big(M_1\otimes C^{n+i_1},
                      M_2\otimes C^{n+i_2}\big),\]
  for $n$ sufficiently large.
  The resulting category  is denoted by  $\tM(K)$ or simply by $\tM$.
 \end{definition}
 It suffices to take $n\geq\max(-i_1,-i_2)$ in
 the definition.
 The module of morphisms is independent of $n$ by the preceding
 Lemma.

\begin{void} 
 The functor $M \mapsto (M,0) $
 is fully faithful and we will identify $\tM_\eff$ with
 its image in $\tM$.

 The natural isomorphism between
 $M\otimes C^{n+1}$ and $M\otimes C^{n}\otimes C$
 defines a distinguished isomorphism of $t$\dash motifs
 \begin{equation}\label{stab}
   (M,i+1) = (M\otimes C,i).
 \end{equation}
 In particular, for $i>0$ we can identify
 $C^{i}$ with $(\1, i)$. But note
 that $(\1,i)$ is an object in $\tM$ even when $i$ is
 negative. 
 \end{void}

\begin{void}
 The operations $\oplus$ and $\otimes$ and
 $\IHom$ extend from the category of effective $t$\dash
 motifs---or parts thereof---to the full category of $t$\dash
 motifs:
 \begin{align*}
  \big(M_1,\,i_1\big)\oplus \big(M_2,\,i_2\big) &\isbydef
    \big(M_1\otimes C^{n+i_1}\ \oplus\
     M_2\otimes C^{n+i_2},\,-n\big) \\
  \big(M_1,\,i_1\big)\otimes \big(M_2,\,i_2\big) &\isbydef
    \big(M_1\otimes M_2,\,i_1+i_2\big) \\
  \IHom\big((M_1,i_1),(M_2,i_2)\big) &\isbydef
    \big(\IHom(M_1,M_2\otimes C^{i_1-i_2+n}),\,-n\big)
  \end{align*}
 The occurrences of $n$ in these definitions
 should be read `with $n$ sufficiently large'. Using
 the isomorphisms (\ref{inthomstab}) and (\ref{stab}), one
 verifies that these are independent of $n$ and
 coincide with the operations on effective $t$\dash motifs,
 whenever defined. 
\end{void}

 From now on we will often drop the integer $i$ from the notation and
 write $M$ for a $t$\dash motif, effective or not.
 
\begin{void}
 As usual, we define the dual of a $t$\dash motif $M$ to be
 $M^\vee\isbydef \IHom(M,\1)$. 
 The operations of direct sum, tensor product, duality and
 internal hom satisfy the expected relations---those
 familiar from the theory of linear representations of groups.
 In particular, there is an \emph{adjunction formula}
 \begin{equation}\label{adjunction}
  \Hom(M_1\otimes M_2,M_3)=\Hom(M_1,\IHom(M_2,M_3)).
 \end{equation}
 Also, taking duals is \emph{reflexive}: the natural morphism
 \begin{equation}\label{reflexivity}
  M \to (M^\vee)^\vee
 \end{equation}
 is an isomorphism. And finally, $\IHom$ is \emph{distributive} over
 $\otimes$ in the sense that the natural morphism
 \begin{equation}\label{distributivity}
    \IHom(M_1, M_3)\otimes\IHom(M_2,M_4)\to
    \IHom(M_1\otimes M_2,M_3\otimes M_4),
 \end{equation}
 is an isomorphism. These identities are easily verified. 
\end{void}

 \begin{remark}\label{preabelian}
  The category $\tM$ has kernels and 
  cokernels. This can be seen as follows.
  All morphisms in $\tM$ become morphisms of effective
  $t$\dash motifs after an appropriate shift with a tensor
  power of the Carlitz motif. It is thus sufficient to show
  that $\tM_\eff$ has kernels and cokernels.

  Let $M_1\to M_2$ be a morphism of effective $t$\dash motifs.
  Its group-theoretic kernel is automatically a $t$\dash
  motif and a kernel in the category $\tM_\eff$. The cokernel
  of $f$ in the pre-abelian category of \emph{free} $K[t]$\dash
  modules---the ordinary cokernel modulo torsion---inherits
  an action of $\sigma$ and one verifies that this 
  defines an effective $t$\dash motif
  and a cokernel of $f$ in $\tM_\eff$. Hence $\tM$ is
  pre-abelian. 
  
  Of course $\tM_\eff$ is not abelian, since for example the
  multiplication-by-$t$ map $\1\to \1$ has trivial kernel
  and cokernel, but is not an isomorphism.
\end{remark}

 The existence of kernels and cokernels, and the existence
 of an internal hom bifunctor satisfying 
 (\ref{adjunction}), (\ref{reflexivity}) and
 (\ref{distributivity}) is summarised in:
 \begin{theorem}
   $\tM_K$ is a rigid $k[t]$\dash linear
   pre-abelian tensor category.\qed
 \end{theorem}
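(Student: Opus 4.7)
The plan is to assemble pieces already developed in the excerpt, so the proof is essentially a verification that each adjective in the statement has been established. For $k[t]$-linearity, the Hom groups in $\tM$ are by definition Hom groups in $\tM_\eff$ (after Carlitz shifts); the $k[t]$-action comes from multiplication by $t$ and commutes with $\sigma$ because $\tau(t) = t$, so composition is $k[t]$-bilinear. The tensor bifunctor $\otimes$ and unit object $\1$ have already been defined, and the associativity, commutativity and unit constraints reduce, after sufficient Carlitz shifts, to the corresponding constraints for the tensor product of $K[t]$-modules, which are standard.

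Pre-abelianness is the content of Remark \ref{preabelian}: in $\tM_\eff$ one takes honest kernels (submodules of free $K[t]$-modules are free, $K[t]$ being a PID) and cokernels modulo $K[t]$-torsion, both of which inherit a $\sigma$-action and satisfy the universal properties. Every morphism in $\tM$ becomes a morphism in $\tM_\eff$ after a large enough shift by $C$, and since $C$ is invertible in $\tM$, kernels and cokernels in $\tM$ are obtained by shifting those in $\tM_\eff$ back.

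For rigidity, one needs the internal hom bifunctor (constructed in \S\ref{inthom} with stabilization (\ref{inthomstab})), the dual $M^\vee \isbydef \IHom(M,\1)$, and the three identities (\ref{adjunction}), (\ref{reflexivity}) and (\ref{distributivity}). The approach is to extend scalars along $K[t] \to K^a(t)$, where all $\sigma$'s become invertible; here $M \mapsto M \otimes_{K[t]} K^a(t)$ lands in the category of finite-dimensional $K^a(t)$-vector spaces equipped with a $\tau$-semi-linear automorphism, and the three identities hold for the usual reasons from linear algebra on vector spaces. The remaining point is to check that the natural maps, a priori defined only over $K^a(t)$, restrict to morphisms of effective $t$-motifs after a sufficient Carlitz twist — by the same matrix computation as in \S\ref{inthom}, multiplying by a large enough power of $(t - \theta)$ clears denominators, and such a shift is harmless in $\tM$ since $C$ is invertible.

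The main obstacle is really just bookkeeping: each of these verifications must be compatible with the stabilization isomorphisms (\ref{inthomstab}) and (\ref{stab}), so that the resulting adjunctions and isomorphisms in $\tM$ are independent of the chosen $n$ and of chosen bases. Once these compatibilities are recorded, the theorem follows by inspection.
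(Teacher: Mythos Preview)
Your proposal is correct and matches the paper's approach: the theorem is stated in the paper as a summary (with a bare \qed and no separate proof), collecting precisely the ingredients you list --- the $k[t]$-linear structure on Hom, the tensor product and unit, the kernels and cokernels from Remark~\ref{preabelian}, and the internal hom together with the identities (\ref{adjunction}), (\ref{reflexivity}), (\ref{distributivity}). Your write-up simply makes explicit the verifications the paper leaves as ``easily verified,'' including the scalar-extension argument to $K^a(t)$ for checking the rigidity identities.
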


\section{The {\it t}-Motivic Galois Group $\Gamma$}
\label{thetmotivicgal}

Passing to the category $\tM^\circ_K$ of objects
`up to isogeny' (\emph{see} \S \ref{subsecisog} below
for the definition) we obtain a rigid $k(t)$\dash linear
abelian tensor category.

On a suitable subcategory of $\tM^\circ_K$ (closed under
$\otimes$, $\oplus$, \ldots) we define
a neutral fibre functor: a fully faithful exact functor to
the category of finite dimensional $k(t)$\dash vector spaces.
The main theorem on Tannakian duality then asserts that this
subcategory is equivalent with the category of $k(t)$\dash linear
representations of some affine group scheme $\Gamma=\Gamma_K$
over $k(t)$. (\emph{see} \S \ref{subsecfibre})

In constructing this subcategory and fibre functor, we follow
closely Papanikolas \cite{Papanikolas08}.

\subsection{Isogenies}\label{subsecisog}

\begin{definition}
 An \emph{isogeny} between two effective $t$\dash motifs
 $M_1$ and $M_2$ is by definition a morphism
  $f\in \Hom_{\sigma}(M_1,M_2)$ such that there
  exists a $g\in \Hom_{\sigma}(M_2,M_1)$
  and a nonzero $h$ in $k[t]$ with $fg=h\,\id=gf$.
\end{definition} 

 The category whose objects are effective $t$\dash motifs
 over $K$ and whose hom-sets are the modules
 $\Hom_{\sigma}(-,-)\otimes_{k[t]}k(t)$ is denoted
 by $\tM^\circ_\eff(K)$. Sometimes we
 will refer to its objects as \emph{effective
 $t$\dash motifs up to isogeny}.

 Denote by $M(t)$ the $K(t)$\dash module
 $M\otimes_{K[t]} K(t)$. The action of $\sigma$ on $M$ extends
 naturally and makes $M(t)$ into a $K(t)[\sigma]$\dash module.
 \begin{proposition}
  The natural map
   \[ \Hom_{\sigma}(M_1,M_2)\otimes_{k[t]} k(t) \
            \to \
      \Hom_{K(t)[\sigma]}(M_1(t),M_2(t)) \]
  is an isomorphism.
 \end{proposition}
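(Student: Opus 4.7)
The plan is to prove both injectivity and surjectivity of the natural map, with surjectivity being the substantive part.

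\emph{Injectivity.} The natural map $\Hom_\sigma(M_1, M_2) \to \Hom_{K(t)[\sigma]}(M_1(t), M_2(t))$ is injective because $M_2$ is torsion-free over $K[t]$, so $M_2 \hookrightarrow M_2(t)$, and hence $\Hom_\sigma(M_1, M_2)$ is itself $k[t]$-torsion-free. The target is naturally a $k(t)$-vector space (division by nonzero elements of $k[t]$ preserves $\sigma$-equivariance, since $\tau$ fixes $k(t)$), so injectivity is preserved after tensoring the source with $k(t)$ over $k[t]$.

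\emph{Surjectivity.} Given $\phi \in \Hom_{K(t)[\sigma]}(M_1(t), M_2(t))$, the plan is to produce a nonzero $h \in k[t]$ with $h\phi(M_1) \subseteq M_2$; since $\tau(h)=h$, the map $h\phi$ will automatically commute with $\sigma$ and lie in $\Hom_\sigma(M_1, M_2)$, exhibiting $\phi$ as the image of $h\phi \otimes h^{-1}$. Fixing bases, write $\sigma$ on $M_i$ as a matrix $S_i \in \M(r_i \times r_i, K[t])$ with $\det S_i$ a unit times $(t-\theta)^{d_i}$, and $\phi$ as $A \in \M(r_2 \times r_1, K(t))$; the $\sigma$-equivariance $\sigma\phi = \phi\sigma$ translates to $\tau(A) = S_2^{-1} A S_1$. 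The task is to find $h \in k[t]$ such that $hA$ has entries in $K[t]$.

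\emph{Key step: the pole-set of $A$ lies in $k^a$.} Let $D \subseteq K^a$ denote the finite set of poles of the entries of $A$. Working over $K^a$, on which $\tau$ is bijective, the right-hand side of $\tau(A) = S_2^{-1} A S_1$ has poles only in $D \cup \{\theta\}$ (the pole at $\theta$ coming from $S_2^{-1}$), while the left-hand side has pole-set $\tau(D)$; hence $\tau(D) \subseteq D \cup \{\theta\}$. The inverted relation $\tau^{-1}(A) = \tau^{-1}(S_2)\, A\, \tau^{-1}(S_1)^{-1}$ symmetrically yields $\tau^{-1}(D) \subseteq D \cup \{\theta^{1/q}\}$, i.e.\ $D \subseteq \tau(D) \cup \{\theta\}$. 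The disjoint sets $D \setminus \tau(D)$ and $\tau(D) \setminus D$ thus both sit inside $\{\theta\}$ while having equal cardinality (since $|D|=|\tau(D)|$), forcing both to be empty and $D = \tau(D)$. Any finite $\tau$-stable subset of $K^a$ lies in $k^a$, because Frobenius orbits outside $k^a$ are infinite. Given $D \subseteq k^a$, let $h$ be the product of the $\Gal(k^a/k)$-minimal polynomials of representatives of the $\tau$-orbits in $D$, each raised to a power exceeding the maximum pole-multiplicity; then $h \in k[t]$ and $hA \in \M(r_2 \times r_1, K[t])$, completing the proof. The main obstacle is this key step, specifically the parity-cardinality argument that upgrades the two one-sided inclusions to $D = \tau(D)$; everything else is routine bookkeeping.
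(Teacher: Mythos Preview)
Your proof is correct and rests on the same idea as the paper's: the matrices $S_1$ and $S_2^{-1}$ affect the denominator of $F$ (your $A$) only at $t=\theta$, so the functional equation forces the denominator to be $\tau$-fixed, hence to lie in $k[t]$.

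The paper's execution is shorter. Rather than passing to the pole set $D\subset K^a$, it works directly with the monic minimal common denominator $h\in K[t]$ of $F$: the denominator of $S_2^{-1}FS_1$ is $(t-\theta)^r h$ for some $r$, while that of $\tau(F)$ is $\tau(h)$; equating and comparing degrees gives $r=0$ and $\tau(h)=h$, so $h\in k[t]$. Your disjointness/cardinality argument on $D$ is exactly the root-set translation of this degree comparison---perfectly valid, but it costs you an extra paragraph and a separate bookkeeping step for pole multiplicities at the end, both of which the polynomial formulation absorbs for free.
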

 Hence the functor $M \mapsto M(t)$ is fully faithful
 on $\tM^\circ_\eff$. We shall identify $\tM^\circ_\eff$ with
 its image in the category of $K(t)[\sigma]$\dash modules. If we
 take $M_1$ and $M_2$ in the Proposition to be the unit
 $t$\dash motif $\1$, we obtain that the field of
 invariants $K(t)^\sigma$ equals $k(t)$.
 \begin{proof}[Proof of the
Proposition] (\emph{See}
 also \cite{Papanikolas08}.)
 Note that the map is $k(t)$\dash linear. Injectivity is clear.
  
 To show surjectivity, choose $K[t]$\dash
 bases for $M_1$ and $M_2$ and express the action of
 $\sigma$ on them through matrices $S_1$
 and $S_2$. Expressed on the
 induced bases for $M_1(t)$ and $M_2(t)$, a $K(t)[\sigma]$\dash
 homomorphism from $M_1(t)$ to $M_2(t)$ is a matrix $F$ over
 $K(t)$ that satisfies
 \begin{equation}\label{funceq} S_2^{-1} F S_1 = \tau(F).
 \end{equation}
 Let  $h$ be the minimal common denominator of the
 entries of $F$, that is, the minimal
 monic polynomial in $K[t]$ with the property that $h F$ has
 entries in $K[t]$. The minimal common denominator of
 the entries of the right-hand-side $\tau(F)$ is
 $\tau(h)$ and the minimal common denominator of the
 left hand side is $(t-\theta)^rh$ for some $r$.
 Equating them yields $r=0$ and $\tau(h)=h$, hence
 the Proposition.
\end{proof}

 This Proposition has an important consequence:
 \begin{corollary}
  $\tM^\circ_\eff$ is an abelian $k(t)$\dash linear
  tensor category. $\tM^\circ$ is a rigid abelian $k(t)$\dash linear
  tensor category.
 \end{corollary}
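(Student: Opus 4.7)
The strategy is to exploit the pre-abelian structure on $\tM_\eff$ and $\tM$ (Remark~\ref{preabelian}) together with the fully faithful embedding $N \mapsto N(t)$ of $\tM^\circ_\eff$ into the abelian category of $K(t)[\sigma]$-modules supplied by the preceding Proposition. Concretely, I would first establish that $\tM^\circ_\eff$ is abelian; the same argument will then yield abelianness of $\tM^\circ$, and rigidity of $\tM^\circ$ will be inherited from $\tM$.

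A morphism $\varphi$ in $\tM^\circ_\eff$ is of the form $h^{-1}f$ with $f \in \tM_\eff$ and $0 \neq h \in k[t]$; since $h$ is a unit in $k(t)$, the kernel and cokernel of $\varphi$ coincide with those of $f$ computed in $\tM_\eff$, and these exist by Remark~\ref{preabelian}. Hence $\tM^\circ_\eff$ is pre-abelian, and it suffices to show that for every $f \in \tM_\eff$ the canonical map $\mathrm{coim}(f) \to \im(f)$ becomes an isomorphism in $\tM^\circ_\eff$. This map has trivial kernel and trivial cokernel in $\tM_\eff$ by construction, so the whole problem reduces to the following claim: \emph{if $g : N_1 \to N_2$ in $\tM_\eff$ has zero kernel and zero cokernel in $\tM_\eff$, then $g$ is an isogeny.}

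For this, apply $N \mapsto N(t)$. The kernel in $\tM_\eff$ is the underlying $K[t]$-kernel, and the cokernel in $\tM_\eff$ is the $K[t]$-cokernel modulo torsion (Remark~\ref{preabelian}); so the hypothesis says $g$ is $K[t]$-injective with torsion $K[t]$-cokernel. Since $K(t)$ is flat over $K[t]$ and kills $K[t]$-torsion, $g(t)$ is an isomorphism in the abelian category of $K(t)[\sigma]$-modules. The preceding Proposition says $N \mapsto N(t)$ is fully faithful on $\tM^\circ_\eff$, and fully faithful functors reflect isomorphisms, so $g$ is an isomorphism in $\tM^\circ_\eff$, i.e.\ an isogeny. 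This settles abelianness of $\tM^\circ_\eff$.

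The tensor product on $\tM^\circ_\eff$ is visibly $k(t)$-bilinear and is bi-exact because tensoring over $K[t]$ with a free module is exact. For $\tM^\circ$, every morphism is, after tensoring with a sufficiently high power of the Carlitz motif $C$, a morphism of effective $t$-motifs, so the same reasoning shows that $\tM^\circ$ is abelian; rigidity then follows from the internal $\IHom$, together with adjunction (\ref{adjunction}), reflexivity (\ref{reflexivity}) and distributivity (\ref{distributivity}) already verified in $\tM$, all of which remain isomorphisms after passing to the $k(t)$-linearization. The one genuinely non-trivial step is the italicized claim above, and it is exactly there that the full faithfulness of $N \mapsto N(t)$ supplied by the previous Proposition is indispensable.
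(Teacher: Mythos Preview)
Your proof is correct and follows essentially the same approach as the paper: both arguments hinge on the full faithfulness of $N\mapsto N(t)$ into $K(t)[\sigma]$\dash modules supplied by the preceding Proposition, and both deduce abelianness of $\tM^\circ$ from that of $\tM^\circ_\eff$ and rigidity from the rigidity of $\tM$. The paper's version is terser---it simply observes that kernels and cokernels in $\tM^\circ_\eff$ coincide with the module-theoretic ones in $K(t)[\sigma]$\dash Mod, so the abelian axioms are inherited---whereas you unpack this by computing kernels and cokernels in $\tM_\eff$ and then checking that the canonical map $\mathrm{coim}\to\mathrm{im}$ is an isogeny via the embedding; but these are two phrasings of the same idea.
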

Recall that $\tM_\eff$ is not abelian (remark \ref{preabelian}.)
\begin{proof}[Proof of the Corollary]
  The kernels and cokernels in $\tM^\circ_\eff$ are just the
  ordinary group-theoretic kernels and cokernels in 
  the category of left $K(t)[\sigma]$\dash modules,
  and it is clear that a morphism whose kernel and cokernel vanish
  is an isomorphism, and that $\tM^\circ_\eff$ is abelian.
  
  That $\tM^\circ$ is abelian is implied by the abelianness of
  $\tM^\circ_\eff$ and that it is rigid is implied by the
  rigidity of $\tM$, the required properties of $\IHom$ are
  preserved under extension of scalars from $k[t]$ to $k(t)$.
  \end{proof}

\subsection{A Fibre Functor}\label{subsecfibre}

In this section we construct a neutral fibre functor on a
sub-category of $\tM^\circ(K)$, where $k[t]\to K$ is 
assumed to be injective. This construction occurs
already in \cite{Anderson86} and is interpreted as a fibre functor
in \cite{Papanikolas08}. I do not know if there exists a
neutral fibre functor on \emph{all} of $\tM^\circ$.

\begin{void} 
 Let $K^\dagger$ be a field containing $K$ that is algebraically
 closed and complete with respect to a valuation $\|\cdot\|$. 
 Denote by $\Kdagt\subset K^\dagger[[t]]$ the subring
 of restricted power series, that is, those power series whose
 coefficients converge to $0$. In particular, these series have
 a radius of convergence greater than or equal to $1$.
 Note that $\Kdagt$ is closed under $\tau$---raising
 all coefficients to the $q$\dash th power.  A $\tau$\dash
 invariant power series has coefficients in the finite field
 $k$ and hence is restricted if and only if it is a polynomial
 in $t$. That is, $\Kdagt^\tau\!=k[t]$. Denote by
 $\Kdagtt$ the field of fractions of $\Kdagt$. In the next
 paragraph we shall show that $\Kdagtt^\tau\!=k(t)$.
\end{void}

\begin{void} \label{intrat}
 Define the functors $\Han(-,k[t])$ and $\Han(-,k(t))$ on
 the category $\tM_\eff$ of effective $t$\dash motifs as
 \begin{align*}
   \Han(M,k[t]) &\isbydef
     \big(M\otimes_{K[t]} \Kdagt\big)^\sigma\\
   \Han(M,k(t)) &\isbydef 
     \big(M\otimes_{K[t]} \Kdagtt\big)^\sigma
 \end{align*}
  The functors $\Han(-,k[t])$ and $\Han(-,k(t))$ are related:
 \begin{proposition}
  $\Han(M,k(t))=\Han(M,k[t])\otimes k(t)$.
 \end{proposition}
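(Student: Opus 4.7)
My plan is to focus on surjectivity of the natural inclusion $\Han(M,k[t])\otimes_{k[t]} k(t)\to\Han(M,k(t))$: given $v\in\Han(M,k(t))\subset M\otimes_{K[t]}\Kdagtt$, I will produce a nonzero $h\in k[t]$ with $hv\in M\otimes_{K[t]}\Kdagt$. Such an $hv$ is automatically $\sigma$-invariant (since $\tau(h)=h$ and $\sigma(v)=v$), hence lies in $\Han(M,k[t])$, and $v=(hv)\otimes h^{-1}$ then realises $v$ in the image. Injectivity itself is routine: the localisation $\Kdagt\hookrightarrow\Kdagtt$ is flat and $M$ is free over $K[t]$, so $\Han(M,k[t])\hookrightarrow\Han(M,k(t))$, and this survives tensoring the source with $k(t)$.

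Fix a $K[t]$-basis of $M$ and let $S$ be the matrix of $\sigma$, whose entries lie in $K[t]$. In coordinates $v$ is a column vector in $\Kdagtt^r$, and $\sigma(v)=v$ reads $S\tau(v)=v$. Define
\[ I\isbydef\big\{h\in\Kdagt\,:\,hv\in M\otimes_{K[t]}\Kdagt\big\}. \]
This is a nonzero ideal, since common denominators of the entries of $v$ exist. The first step is to show $I$ is $\tau$-stable: for $h\in I$, $\tau(hv)$ still lies in $M\otimes\Kdagt$, and rewriting $\tau(hv)=\tau(h)\tau(v)=\tau(h)S^{-1}v$ and then multiplying by $S$ (whose entries lie in $K[t]\subset\Kdagt$) yields $\tau(h)v\in M\otimes\Kdagt$, so $\tau(h)\in I$.

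The next step is to extract a nonzero $\tau$-invariant element of $I$, which is where the analytic structure of the Tate algebra enters. Since $K^\dagger$ is complete and algebraically closed, $\Kdagt$ is a PID, so $I=(h_0)$ for some nonzero $h_0$; $\tau$-stability then gives $\tau(h_0)=c\, h_0$ for some unit $c\in\Kdagt^\times$. Weierstrass preparation writes $h_0=a\cdot p$ uniquely with $a\in\Kdagt^\times$ and $p\in K^\dagger[t]$ a distinguished (monic, with non-leading coefficients of norm $<1$) polynomial. Both properties are preserved by $\tau$, so $\tau(h_0)=\tau(a)\tau(p)$ and $(ca)\cdot p$ are two Weierstrass factorisations of the same element, and uniqueness forces $\tau(p)=p$. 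A $\tau$-fixed element of $\Kdagt$ has coefficients in the finite field $k$, so $p\in k[t]$; since $p=a^{-1}h_0\in I$, taking $h=p$ completes the argument.

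The main obstacle is this last extraction: the descent of a common denominator from $\Kdagt$ all the way down to $k[t]$ hinges on Weierstrass preparation and the principal ideal structure of the Tate algebra. Once those analytic inputs are in hand, the $\tau$-invariance of the polynomial part drops out of the uniqueness clause, and the rest of the argument is essentially formal bookkeeping.
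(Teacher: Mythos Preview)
Your argument is correct and is essentially the paper's own proof: the paper likewise picks a polynomial denominator $h=\prod(t-\alpha_i)$ of minimal degree (using that $\Kdagt$ is a PID with ideals generated by such products) and deduces $\tau(h)=h$ from the invariance relation $\tau(h)m=h\sigma(m)$, which is exactly your $\tau$-stability of $I$ plus uniqueness of the generator. One small slip: in the Tate algebra the Weierstrass polynomial has non-leading coefficients of norm $\le 1$ (equivalently, roots in the closed unit disc), not $<1$; this does not affect your uniqueness argument.
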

 Taking $M$ to be $\1$ yields 
 \begin{corollary}
  $\Kdagtt^\tau\!=k(t)$.\qed
 \end{corollary}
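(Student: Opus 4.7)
The plan is to prove $\Kdagtt^\tau=k(t)$ via divisor theory on the closed unit disk of $K^\dagger$. The key structural input is that the Tate algebra $\Kdagt$ is a principal ideal domain whose maximal ideals are exactly the $(t-a)$ for $a\in K^\dagger$ with $\|a\|\leq 1$ (a standard consequence of Weierstrass preparation, available because $K^\dagger$ is algebraically closed). The $(t-a)$-adic valuation $v_a$ extends to $\Kdagtt$, so every nonzero $\alpha\in\Kdagtt$ has a well-defined, finitely supported divisor $\divi(\alpha)=\sum_a v_a(\alpha)[a]$.

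Because $K^\dagger$ is algebraically closed and hence perfect, $\tau$ is an automorphism of $\Kdagt$, with $\tau^{-1}(t-a)=(t-a^{1/q})$, and therefore $v_a\circ\tau=v_{a^{1/q}}$. For $\alpha\in\Kdagtt^\tau\setminus\{0\}$ this yields $v_a(\alpha)=v_{a^{1/q^n}}(\alpha)$ for every $n\geq 0$. The finiteness of the support of $\divi(\alpha)$ forces the sequence $\{a^{1/q^n}\}_{n\geq 0}$ to be eventually periodic whenever $v_a(\alpha)\neq 0$: writing $a^{1/q^m}=a^{1/q^n}$ with $m>n$ and raising to $q^m$ gives $a^{q^{m-n}}=a$, which forces $a\in\overline{k}\subset K^\dagger$. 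One also sees, along the way, that $v_a(\alpha)$ is constant on each Galois orbit of $\overline{k}$ over $k$.

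For each such orbit $O\subset\overline{k}$, the minimal polynomial $P_O(t)\isbydef\prod_{a\in O}(t-a)$ lies in $k[t]$. I would then set
\[ \beta\isbydef\prod_O P_O(t)^{n_O}\in k(t)^{\times},\qquad n_O\isbydef v_a(\alpha)\text{ for any }a\in O, \]
the product ranging over the finitely many orbits in the support of $\divi(\alpha)$. By construction $\divi(\beta)=\divi(\alpha)$, so $\alpha/\beta\in\Kdagtt$ has trivial divisor, which by the PID structure of $\Kdagt$ means $\alpha/\beta\in\Kdagt^{\times}$. But $\alpha/\beta$ is $\tau$-invariant as a ratio of $\tau$-invariants, so it lies in $\Kdagt^\tau=k[t]$---the identity already recorded in the excerpt---whose units are $k^{\times}$. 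Hence $\alpha\in k(t)$, and the reverse inclusion $k(t)\subseteq\Kdagtt^\tau$ is trivial since $\tau$ fixes both $k$ and $t$.

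The main obstacle is the step locating the support of $\divi(\alpha)$ inside $\overline{k}$: it rests on the interplay between the finiteness of divisor support on the Tate algebra and the existence of iterated $q$-th roots inside the algebraically closed field $K^\dagger$. Once this is established, the remaining steps---matching divisors by a ratio of minimal polynomials of $\tau$-orbits and collapsing the surviving unit to a constant in $k^{\times}$---are routine consequences of the PID structure and the already known identity $\Kdagt^\tau=k[t]$.
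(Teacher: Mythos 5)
Your proof is correct, but it takes a genuinely different route from the paper's. The paper deduces the corollary from the more general Proposition $\Han(M,k(t))=\Han(M,k[t])\otimes k(t)$ applied to $M=\1$, and that Proposition is proved by a short minimal-denominator argument: write a $\sigma$\dash invariant $g$ as $h^{-1}m$ with $h\in\Kdagt$ a product of factors $(t-\alpha_i)$ of minimal degree, note that applying $\tau$ to the invariance equation forces $\tau(h)=h$ by uniqueness of the minimal denominator, and conclude $h\in\Kdagt^\tau=k[t]$, whence also $\tau(m)=m$. You instead carry out a full divisor analysis on the closed unit disk: $\tau$\dash invariance propagates each local valuation along the backward Frobenius orbit $a\mapsto a^{1/q}$, the finiteness of the support of $\divi(\alpha)$ forces these orbits to be finite and hence contained in $\overline{k}$, and matching $\divi(\alpha)$ with a product of minimal polynomials of Frobenius orbits reduces everything to the identity $\Kdagt^\tau=k[t]$. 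Both arguments rest on Tate's description of the ideals of $\Kdagt$. Yours is longer but yields sharper information (the zeros and poles of a $\tau$\dash invariant element of $\Kdagtt$ lie in $\overline{k}$, with multiplicities constant on Galois orbits); the paper's is more economical and, being stated for an arbitrary effective $t$\dash motif $M$ rather than for $\1$ alone, simultaneously proves the Proposition from which the corollary is drawn, whereas your divisor bookkeeping is specific to the rank-one (field) case. One cosmetic point: at the end you need not identify the surviving unit as an element of $k^\times$; the containment $\alpha/\beta\in\Kdagt^\tau=k[t]\subset k(t)$ already gives $\alpha\in k(t)$.
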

 \begin{proof}[Proof of the Proposition]
  The ring $\Kdagt$ is a principal
  ideal domain and every non-zero ideal is of the form 
   \[ (t-\alpha_1)(t-\alpha_2)\cdots (t-\alpha_n)\Kdagt
     \subset \Kdagt \]
  with $\|\alpha_i\|\leq 1$ for all $i$
  \cite{Tate71}.
  
  Take a $g\in\Han(M,k(t))$ and write it 
  as $h^{-1}m$ with $m\in M\otimes \Kdagt$ and 
  $h$ a finite product $h=\prod(t-\alpha_i)$. Assume that
  the degree of $h$ is minimal.
  The invariance of $g=h^{-1}m$ gives
   \[ \tau(h)m =h\sigma(m) \in M\otimes\Kdagt,\]
  hence the minimality of $h$ implies $\tau(h)=h$.
 \end{proof}
\end{void}

 \begin{proposition}
  If $\|\theta\|>1$  then
   $ \Han(C,k[t])\approx k[t]$ and $\Han(C,k(t))\approx k(t)$.
 \end{proposition}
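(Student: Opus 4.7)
The plan is to write down an explicit generator of $\Han(C,k[t])$ as an infinite product, and then use the preceding Corollary $\Kdagtt^\tau=k(t)$ to conclude that it generates the whole module over $k[t]$.

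First I would fix, using that $K^\dagger$ is algebraically closed, an element $\lambda\in K^\dagger$ with $\lambda^{q-1}=-\theta$, and set
\[
\Omega\isbydef\lambda^{-1}\prod_{n=0}^{\infty}(1-t/\theta^{q^n}).
\]
The hypothesis $\|\theta\|>1$ ensures $\|1/\theta^{q^n}\|\to 0$, so the product converges coefficient-wise, with the coefficients of $\Omega$ tending to zero in $K^\dagger$; thus $\Omega\in\Kdagt$. A direct computation using $\lambda^{1-q}=-1/\theta$ then yields $\tau(\Omega)/\Omega=1/(t-\theta)$, so $\sigma(\Omega e)=\tau(\Omega)(t-\theta)e=\Omega e$, exhibiting $\Omega e$ as a nonzero element of $\Han(C,k[t])$.

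Next I would observe that $\Omega$ is a \emph{unit} in $\Kdagt$: its zeros as an analytic function sit at $t=\theta^{q^n}$ for $n\geq 0$, all of which have norm strictly greater than $1$, so $\Omega$ does not vanish on the closed unit disk and $\Omega^{-1}\in\Kdagt$. For the uniqueness step, let $fe\in\Han(C,k[t])$ be arbitrary. Then $(f/\Omega)e$ is $\sigma$-invariant in $C\otimes_{K[t]}\Kdagtt$, so $f/\Omega\in\Kdagtt^\tau=k(t)$. Because $\Omega^{\pm 1}\in\Kdagt$, the condition $f\in\Kdagt$ reduces to $f/\Omega\in k(t)\cap\Kdagt$. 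But the valuation on $K^\dagger$ is trivial on the finite field $k$ and hence on its algebraic closure in $K^\dagger$, so every nonconstant polynomial in $k[t]$ has all its roots on the unit circle; a rational function in $k(t)$ in lowest terms with nonconstant denominator therefore has poles on the unit circle and cannot lie in $\Kdagt$. Hence $k(t)\cap\Kdagt=k[t]$, which gives $\Han(C,k[t])=k[t]\cdot\Omega e\cong k[t]$, and tensoring with $k(t)$ via the preceding Proposition yields $\Han(C,k(t))\cong k(t)$.

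The main substance is constructing $\Omega$ and verifying that both it and its inverse lie in $\Kdagt$; both ingredients depend essentially on $\|\theta\|>1$. Without that hypothesis either the product would fail to converge in $\Kdagt$ or $\Omega$ would acquire zeros on the closed unit disk, and then the extraction of the generator via the $\tau$-invariance of $f/\Omega$ would break down.
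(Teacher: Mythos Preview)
Your proof is correct and follows the same approach as the paper: construct the explicit infinite product $\Omega$, verify it lies in $\Kdagt^\times$ using $\|\theta\|>1$, and conclude that $\Omega e$ generates $\Han(C,k[t])$. One small slip of phrasing: the element $(f/\Omega)e$ is \emph{not} $\sigma$-invariant in $C\otimes\Kdagtt$ (that would mean $\tau(f/\Omega)(t-\theta)=f/\Omega$); what you actually use is that both $fe$ and $\Omega e$ are $\sigma$-invariant, whence the ratio $f/\Omega$ is $\tau$-invariant in $\Kdagtt$. Your detour through $k(t)\cap\Kdagt=k[t]$ is correct but unnecessary: since $\Omega$ is already a unit in $\Kdagt$, you have $f/\Omega\in\Kdagt$ and may invoke $\Kdagt^\tau=k[t]$ directly, which is how the paper (tersely) proceeds.
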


 \begin{proof} (Cf. Lemma 2.5.4 of
 \cite{Anderson90}.)
  Consider the product expansion
   \[ \Omega \isbydef \frac{1}{\sqrt[q-1]{-\theta}}
           \prod_{i\geq 0}\left(1-\frac{t}{\theta^{q^i}}\right), \]
  where $\sqrt[q-1]{-\theta}$ is any root in $K^\dagger$. (Any two
  such roots differ by a scalar in $k^\times$.)
  The infinite product converges for all values of $t$
  and all zeroes have absolute value greater than or equal
  to $\|\theta\| > 1$, thus $\Omega\in K\{t\}^\times$. By
  construction
   $\Omega = (t-\theta) \tau(\Omega)  $
  and therefore $\Han(C,k[t])) = k[t]\Omega e$.
  Similarly $\Han(C,k(t)) = k(t)\Omega e$.
 \end{proof}
 Henceforth, when considering the functors $\Han$, we shall
 always assume that $\|\theta\| > 1$.
 
\begin{void}\label{generalfibre}
 So far, we have only considered \emph{effective} $t$\dash
 motifs. Shifting back and forth with powers of the Carlitz
 motif, we can extend the functors $\Han$ to functors defined
 on all $t$\dash motifs as follows
  \begin{align*}
   \Han((M,i),k[t]) &\isbydef
     \Han(M,k[t])\otimes_{k[t]} \Han(C,k[t])^{\otimes i}\\
   \Han((M,i),k(t)) &\isbydef
     \Han(M,k(t))\otimes_{k(t)} \Han(C,k(t))^{\otimes i}.
 \end{align*}
 The resulting functor is well-defined by the
 canonical isomorphisms
 \[ \Han(M\otimes C,k[t])=\Han(M,k[t])\otimes_{k[t]}\Han(C,k[t]). \]
\end{void}

\begin{void}  
 These functors are not faithful. In fact, we shall
 shortly see that there exist non-trivial $M$ with
 $\Han(M,k[t])=0$.
 \begin{definition}
  A $t$\dash motif $(M,i)$ over $K$ is said to
  be \emph{analytically trivial} if one of the following equivalent
  conditions holds:
  \begin{itemize}
   \item $M\otimes_{K[t]} \Kdagt$ has a
      $\sigma$\dash invariant $\Kdagt$\dash basis,
   \item $M\otimes_{K[t]} \Kdagtt$ has a
      $\sigma$\dash invariant $\Kdagtt$\dash basis,
   \item $\rk_{k[t]} \Han(M,k[t]) = \rk M$,
   \item $\dim_{k(t)} \Han(M,k(t)) = \rk M$.
  \end{itemize}
  Denote by $\tM_\antr\subset \tM$ and
  $\tM^\circ_\antr\subset \tM^\circ$ the full subcategories
  consisting of the analytically trivial objects.
 \end{definition}
 The analytic triviality of a $t$\dash motif $M$
 depends on the embedding of $K$ into $K^\dagger$---see
 \ref{nonunif} for an example. When 
 we are dealing with the category $\tM_\antr$ we will
 assume that such an embedding has been fixed.
 
 \begin{proof}[Proof of equivalence]
  By paragraph \ref{intrat} the first condition is equivalent
  with the second, and the third with the fourth. Clearly 
  the first implies the third.  To conclude the converse
  (that the third implies the first), it
  suffices to show that for all effective
  $t$\dash motifs $M$ the natural map
   \[ \big(M\otimes \Kdagt\big)^\sigma
    \otimes_{k[t]} \Kdagt \to
      M\otimes \Kdagt \]
  is injective. This can be done exactly as in
  \cite[Thm 2]{Anderson86},
  using  $\Kdagt^\tau=k[t]$.
 \end{proof}
\end{void}

 Some immediate consequences of the definition are:
 \begin{theorem}\label{hanfaithful}
 The class of analytically trivial $t$\dash motifs
 is closed under tensor product and duality. Moreover
  \begin{itemize}
   \item $\Han(-,k[t])$ is a faithful $k[t]$\dash
         linear $\otimes$\dash functor on $\tM_\antr$,	 
   \item $\Han(-,k(t))$ is an exact, faithful, $k(t)$\dash
         linear $\otimes$\dash functor on $\tM^\circ_\antr$.	 
  \end{itemize}
  In particular $\tM^\circ_\antr(K)$ is neutral
  Tannakian with fibre functor $\Han(-,k(t))$.\qed
 \end{theorem}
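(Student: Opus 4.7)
The plan is to prove the four assertions (tensor closure, duality closure, faithfulness/tensor-compatibility, exactness) as preparatory steps, then invoke the standard Tannakian recognition theorem.

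First I would verify that $\Han(-,k[t])$ is a tensor functor on $\tM_\antr$, which will simultaneously give closure under $\otimes$. If $M_1, M_2$ are analytically trivial, then by the equivalent conditions in the definition the natural maps $\Han(M_i,k[t]) \otimes_{k[t]} \Kdagt \to M_i \otimes_{K[t]} \Kdagt$ are isomorphisms. Tensoring these together over $\Kdagt$ gives
\[ (M_1 \otimes M_2) \otimes_{K[t]} \Kdagt \ \cong\ \big(\Han(M_1,k[t]) \otimes_{k[t]} \Han(M_2,k[t])\big) \otimes_{k[t]} \Kdagt, \]
with $\sigma$ acting trivially on the first tensor factor. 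Applying $(-)^\sigma$ and using $\Kdagt^\tau = k[t]$ (from the start of \ref{intrat}) yields $\Han(M_1\otimes M_2,k[t]) = \Han(M_1,k[t]) \otimes_{k[t]} \Han(M_2,k[t])$ with full rank, which both proves tensor-functoriality and shows $M_1 \otimes M_2 \in \tM_\antr$. Closure under duality reduces to the effective case and to the analytic triviality of the Carlitz motif, already established in the preceding proposition: a $\sigma$-invariant basis of $M \otimes \Kdagt$ yields a $\sigma$-invariant dual basis of $\IHom_{\Kdagt}(M\otimes \Kdagt, \Kdagt)$, and combined with the invertibility of $\Han(C,k[t])$ this gives analytic triviality of $M^\vee$ as a $t$-motif.

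Next I would establish faithfulness of $\Han(-,k[t])$: if $f \colon M_1 \to M_2$ is a morphism in $\tM_\antr$ with $\Han(f,k[t])=0$, then by the isomorphism $\Han(M_1,k[t]) \otimes_{k[t]} \Kdagt \cong M_1 \otimes_{K[t]} \Kdagt$ the map $f \otimes \Kdagt$ vanishes, and since $K[t] \hookrightarrow \Kdagt$ we get $f=0$. Faithfulness and tensor-compatibility for $\Han(-,k(t))$ on $\tM^\circ_\antr$ follow by inverting $k[t] \setminus \{0\}$, using the analogous isomorphisms over $\Kdagtt$ and the equality $\Kdagtt^\tau = k(t)$ (Corollary in \ref{intrat}). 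For exactness on $\tM^\circ_\antr$, a short exact sequence $0 \to M_1 \to M_2 \to M_3 \to 0$ in $\tM^\circ_\antr$ remains exact after the flat base change $-\otimes_{K(t)} \Kdagtt$; taking $\sigma$-invariants is always left exact, and since each $\Han(M_i,k(t))$ has $k(t)$-dimension equal to $\rk M_i$ the dimension count forces right-exactness as well.

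Finally, $\tM^\circ_\antr(K)$ is a rigid abelian $k(t)$-linear tensor category (inherited from $\tM^\circ$ once tensor/duality closure is established) and $\Han(-,k(t))$ is an exact, faithful, $k(t)$-linear tensor functor to finite-dimensional $k(t)$-vector spaces, so the conclusion that $\tM^\circ_\antr(K)$ is neutral Tannakian with fibre functor $\Han(-,k(t))$ is then a direct application of the standard Tannakian recognition theorem. The main technical point, on which every subsequent step rests, is the naturality-plus-invariance argument in the first paragraph that pins down $\Han(M_1 \otimes M_2,k[t])$; once this is in hand the remaining verifications are essentially bookkeeping.
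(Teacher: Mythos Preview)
Your proposal is correct and essentially spells out the verifications that the paper deems ``immediate consequences of the definition'' (the theorem is stated with a \qed and no proof). Your argument follows the natural line implicit in the paper: use the $\sigma$-invariant bases over $\Kdagt$ to reduce everything to the fact that $\Kdagt^\tau=k[t]$ (respectively $\Kdagtt^\tau=k(t)$), together with the analytic triviality of $C$ for the duality step, and finish with a dimension count for exactness and the Tannakian recognition theorem.
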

 
 It follows from the main Theorem on neutral Tannakian
 categories \cite[Thm 2.11]{Deligne82a} that 
 $\tM^\circ_\antr(K)$ is equivalent with the category
 of $k(t)$\dash linear representations of some
 affine group scheme $\Gamma_K$ over $k(t)$.
 
 But note that $\Gamma_K$ depends on the chosen valuation on
 $K$. We shall usually tacitly assume that a valuation (with
 $\|\theta\|>1$) has been fixed.

 We will end this subsection with an example of 
 a $t$\dash motif that is not analytically trivial.

\begin{example}\label{nonunif}
 Not all $t$\dash motifs are analytically trivial.
 Consider for example, the rank $2$ 
 effective $t$\dash motif
 \[
    M_\xi=K[t] e_1+K[t] e_2\ \text{ with }\ 
    \begin{cases} \sigma(e_1)=\xi t e_1 + e_2\\
                  \sigma(e_2)=e_1 \end{cases} \]
 depending on a parameter $\xi\in K$.

 \emph{Claim:} $M_\xi$ is analytically trivial
  if and only if $\|\xi\|<1$.

 In particular there exists no valuation on $K$ for which
 $M_\xi$ is analytically trivial when $\xi$ is algebraic over
 $k$.
\end{example}

 \begin{proof}[Proof of the Claim]
  Assume that 
  $\Kdagt e_1+\Kdagt e_2$ has
  an invariant vector $ae_1 + b e_2$, with $a,b$
  in $\Kdagt$. Expressing the invariance under $\sigma$ gives
  \[ \begin{cases}a=\tau(a)\xi t + \tau^2(a)\\
                  b=\tau(a)\end{cases} \]
  Expand $a=a_0+a_1t+\cdots$ with $a_i\in K^\dagger$.
  Then it follows that
  $a_0^{q^2}=a_0$, that is, $a_0$ lies in the quadratic extension
  $l/k$ inside $K^\dagger$, and in particular $\|a_0\|=1$
  (assuming $a_0\neq 0$). The higher
  $a_i$ satisfy the recurrence equation 
   \begin{equation}\label{recur} a_n-a_n^{q^2} = \xi a_{n-1}^q.
   \end{equation}
  If $\|\xi\|\geq 1$ then $\|a_n\|\geq 1$ for all $n$ and the
  series $a_0+a_1t+\cdots$ is therefore not a restricted series,
  confirming one direction of the Proposition. If on the other hand
  $\|\xi\|<1$ then define an $a$ recursively by taking at every
  step the unique solution $a_n$ of (\ref{recur}) that
  has $\|a_n\|< 1$. This produces a restricted power series
  for every $a_0\in l^\times$ and it suffices
  to take two $a_0$'s independent over $k$ to obtain two independent
  invariant vectors for $M_\xi\otimes_{K[t]}\Kdagt$.
  \end{proof}

\section{Artin {\it t}-Motifs}\label{artintmotifs}

\subsection{Definition}

 A corollary to Lang's Theorem \cite{Lang56} asserting the
 surjectivity of the map 
  \[ \GL(n,K^s)\to \GL(n,K^s) : A \mapsto \tau(A)A^{-1} \]
 is \cite[Prop. 4.1]{Pink06}:
 \begin{theorem}
 The category of pairs $(V,\sigma)$,
 consisting of a finite dimensional
 $K$-vector space $V$ and an additive map $\sigma:V\to V$
 satisfying $\sigma(x v)=x^q v$ and such that 
 $K\sigma(V)=V$ is neutral Tannakian $k$\dash linear
 with fibre functor
  \[ V \mapsto (V\otimes_K K^s)^\sigma \]
 and fundamental group $G_K\isbydef \Gal(K^s/K)$.\qed
\end{theorem}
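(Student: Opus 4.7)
The plan is to construct a quasi-inverse pair of functors $\omega: V \mapsto (V \otimes_K K^s)^\sigma$ and $W \mapsto (W \otimes_k K^s)^{G_K}$ between the category in the statement and the category $\mathrm{Rep}_k^{\mathrm{cts}}(G_K)$ of continuous finite-dimensional $k$-representations of $G_K$. That $\omega$ lands in $G_K$-representations is immediate: extend $\sigma$ to $V \otimes_K K^s$ by $v \otimes \lambda \mapsto \sigma(v) \otimes \lambda^q$, whereupon $G_K$ acts through the second factor and commutes with this extended $\sigma$, so preserves its fixed points.

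The heart of the argument is the dimension equality $\dim_k \omega(V) = \dim_K V$ together with the fact that the natural comparison map $\omega(V) \otimes_k K^s \to V \otimes_K K^s$ is an isomorphism. For the lower bound, pick a $K$-basis $e_1, \dots, e_n$ of $V$; the matrix $A$ of $\sigma$ in this basis lies in $\GL_n(K)$---the condition $K\sigma(V) = V$ being precisely the invertibility of $A$---and an invariant $K^s$-basis amounts to a $B \in \GL_n(K^s)$ satisfying $A \tau(B) = B$, equivalently $\tau(B)B^{-1} = A^{-1}$, which is exactly what the Lang-type surjectivity cited just above the theorem provides. For the upper bound and the injectivity of the comparison map, take a minimal $K^s$-linear relation among $\sigma$-invariants; applying $\sigma$ and subtracting yields a strictly shorter relation unless every coefficient $\lambda$ satisfies $\lambda^q = \lambda$, and hence lies in $k$. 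Exactness, faithfulness, and $\otimes$-compatibility of $\omega$ are then formal consequences of the isomorphism just established together with faithful flatness of $K^s$ over $K$; kernels and cokernels in the source are immediate, and duals may be defined either directly using the $K^s$-level invertibility of $\sigma$ together with Galois descent, or more lazily by transporting them through the eventual equivalence.

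To conclude that $\omega$ is an equivalence onto $\mathrm{Rep}_k^{\mathrm{cts}}(G_K)$, one verifies (i) continuity of the $G_K$-action on $\omega(V)$, and (ii) that both adjunction maps $V \to (\omega(V) \otimes_k K^s)^{G_K}$ and $W \to \omega((W \otimes_k K^s)^{G_K})$ are isomorphisms. The adjunctions in (ii) are isomorphisms after tensoring up to $K^s$, so Galois descent combined with the dimension count of the previous paragraph finishes them. The expected main obstacle is (i): one must show the invariant $K^s$-basis $B$ above may be chosen over a \emph{finite} separable extension of $K$, which follows because the defining equations $A\tau(B) = B$, $\det B \neq 0$ exhibit $B$ as a point of a finite-type $K$-scheme, so its coordinates generate a finite separable extension of $K$. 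With (i) and (ii) in place, the tannakian recognition theorem \cite[Thm 2.11]{Deligne82a} identifies the fundamental group of the source with $G_K$.
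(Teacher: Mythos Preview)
The paper does not actually prove this theorem: it is stated with a trailing \verb|\qed| and attributed as a corollary of Lang's surjectivity, with a pointer to \cite[Prop.~4.1]{Pink06}. Your proposal is a correct fleshing-out of exactly that standard argument---Lang's map furnishes a $\sigma$-invariant $K^s$-basis, the minimal-relation trick gives $k$-linear independence of invariants, and Galois descent plus the finite-type observation handle the rest---so there is nothing to compare beyond noting that you have supplied the details the paper outsources.

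One cosmetic remark: once you have established the explicit quasi-inverse and hence an equivalence with $\mathrm{Rep}_k^{\mathrm{cts}}(G_K)$, invoking the Tannakian recognition theorem is unnecessary; the neutrality, rigidity, and identification of the fundamental group all follow immediately from the equivalence itself. Your parenthetical about duals (``transport through the eventual equivalence'') is therefore the cleanest route and is not circular, since your equivalence is built directly rather than via the Tannakian formalism.
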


\begin{remarks}
The invariants $(-)^\sigma$ are taken for the diagonal
action $V\otimes_K K^s$ induced by the given action
of $\sigma$ on $V$ and the $q$\dash th power map
on $K^s$. This is the unique  extension of
$\sigma:V\to V$ to a semi-linear map 
$V\otimes K^s\to V\otimes K^s$.

If $K$ is not perfect $K\sigma(V)$ need not coincide
with $\sigma(V)$, as can be seen already when
$(V,\sigma)=(K,x\mapsto x^q)$.
 
 We abusively write $G_K$ for both the pro-finite group and
 the corresponding constant affine group scheme over $k$
(obtained
 as the limit of the system of finite constant group schemes
 corresponding to the finite quotients of the pro-finite
 group.) Their categories of
 representations on finite dimensional $k$\dash vector spaces
 coincide.
\end{remarks}
 
 A pair $(V,\sigma)$ induces an effective $t$\dash motif
 $M(V)\isbydef V\otimes_K K[t]$ where the action of $\sigma$
 is induced from the action on $V$.

 We would like to interpret the collection of $t$\dash
 motifs $M(V)$ as a Tannakian subcategory of
 $\tM^\circ$, but there are of course many more
 morphisms $M(V_1)\to M(V_2)$ 
 than morphisms $V_1\to V_2$ and the kernel and cokernel
 of a morphism from $M(V_1)$ to $M(V_2)$ are typically not of the
 form $M(V)$. 
 \begin{proposition}\label{cstmot}
  Let  $M$ be an effective $t$\dash motif over $K$.
  The following are equivalent:
  \begin{itemize}
   \item $M$ is isomorphic to a sub-quotient of $M(V)$ for some $V$,
   \item $M\otimes_K {K^s}\approx n\1$ for some $n$.
  \end{itemize}
  \end{proposition}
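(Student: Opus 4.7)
The plan is to establish the two implications separately. Direction (ii) $\Rightarrow$ (i) proceeds by Galois descent, while direction (i) $\Rightarrow$ (ii) reduces to a Hilbert 90-type statement about sub-modules of trivial $t$-motifs over $K^s$.

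For (i) $\Rightarrow$ (ii), the theorem of Pink recalled above yields $V \otimes_K K^s \approx n(K^s, \tau)$ for any pair $(V,\sigma)$, and hence $M(V) \otimes_K K^s \approx n\1_{K^s}$. Since extension of scalars preserves sub-quotients in $\tM_\eff$, it suffices to show that every sub-quotient of $n\1_{K^s}$ in $\tM_\eff(K^s)$ is of the form $m\1_{K^s}$. For a sub-effective-$t$-motif $M' \subset n\1_{K^s}$, I would choose a $K^s[t]$-basis of $M'$ and encode the inclusion by a matrix $T\in \M_{n\times r}(K^s[t])$ of full column rank, so that $\tau$-stability yields $\tau(T) = TB$ for the matrix $B$ representing $\sigma$ on $M'$. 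Producing an isomorphism $M'\approx r\1_{K^s}$ is equivalent to finding $S\in \GL_r(K^s[t])$ with $B = S \tau(S)^{-1}$. I would establish this by reducing via Smith normal form over the PID $K^s[t]$ to the rank-one case, where a direct coefficient-wise argument shows that any $\tau$-stable principal ideal in $K^s[t]$ is generated by an element of $k[t]$; this uses only that every element of $K^s$ has a $(q-1)$-th root, which is automatic since $K^s$ is separably closed. Quotients are handled by duality.

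For (ii) $\Rightarrow$ (i), an isomorphism $M \otimes_K K^s \approx n\1_{K^s}$ involves only finitely many elements of $K^s$ and hence descends to an isomorphism $M\otimes_K L \approx n\1_L$ for a finite Galois extension $L/K$ with group $G := \Gal(L/K)$. Set
\[ N(M) \isbydef (M\otimes_K L)^\sigma, \]
a free $k[t]$-module of rank $n$ with a $G$-action commuting with multiplication by $k[t]$. Galois descent of morphisms, combined with the elementary identification $\Hom_{\tM_\eff(L)}(n_1\1,n_2\1) = \M_{n_2\times n_1}(k[t])$, shows that $M\mapsto N(M)$ is fully faithful on the full subcategory of $\tM_\eff(K)$ of $t$-motifs trivialised by $L$; moreover, for $V$ trivialised by $L$ one computes directly that $N(M(V)) = W\otimes_k k[t]$, where $W := (V\otimes_K L)^\sigma$ is the $k[G]$-representation attached to $V$ by Pink. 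Since $k[G]$ is self-injective, every finitely generated $k[G]$-module embeds into a free one; applied after extending scalars to $k(t)$ and then clearing denominators, this yields a $G$-equivariant, $k[t]$-linear embedding $N(M) \hookrightarrow W \otimes_k k[t]$ with $W := k[G]^m$ for some $m$. By full faithfulness of $N(-)$ and flat base change, this lifts uniquely to an injection $M \hookrightarrow M(V)$ in $\tM_\eff(K)$, where $V$ corresponds to $W$ under Pink, exhibiting $M$ as a sub-object of $M(V)$.

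The principal obstacle is the Hilbert 90-type step in (i) $\Rightarrow$ (ii): producing a $\sigma$-invariant $K^s[t]$-basis of a sub-effective-$t$-motif of $n\1_{K^s}$. Once this is in hand, the rest---Galois descent, self-injectivity of $k[G]$, and the elementary $\Hom$-computations---is routine.
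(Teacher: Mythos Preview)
There is a genuine gap in your argument for (i) $\Rightarrow$ (ii). Smith normal form over $K^s[t]$ writes $T=PDQ$ with $P\in\GL_n(K^s[t])$, $Q\in\GL_r(K^s[t])$ and $D$ diagonal; replacing the basis of $M'$ via $Q$ is harmless, but replacing the basis of the ambient $(K^s[t])^n$ via $P$ turns its $\sigma$ from the identity matrix into $P^{-1}\tau(P)$. After this change the ambient module is no longer $n\1$, the summands $d_iK^s[t]$ sitting on the new coordinate axes are not $\tau$-stable ideals of $K^s[t]$, and your rank-one lemma no longer applies to them. What is actually needed here is a descent statement: every $\tau$-stable $K^s(t)$-subspace $W\subset K^s(t)^n$ is spanned by $W\cap k(t)^n$ (equivalently, the $\tau$-fixed $K^s(t)$-point $[W]$ of the Grassmannian $\mathrm{Gr}(r,n)$ already lies in $\mathrm{Gr}(r,n)\big(k(t)\big)$). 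That is not a consequence of Smith normal form. The paper, for its part, simply asserts this implication in one line.

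Your argument for (ii) $\Rightarrow$ (i) is correct but takes a longer road than the paper. Once $M\otimes_K K'\cong n\1_{K'}$ for some finite extension $K'/K$, the paper just uses the unit of the restriction-of-scalars adjunction to embed
\[
  M\ \hookrightarrow\ R_{K'/K}(M\otimes_K K')\ \cong\ R_{K'/K}(n\1_{K'}),
\]
and observes that the right-hand side is $M(V)$ for $V=(K')^n$ regarded as a $K$-vector space with $\sigma(x)=x^q$. This exhibits $M$ directly as a sub-object of some $M(V)$, bypassing both the full-faithfulness of your functor $N(-)$ and the appeal to self-injectivity of $k[G]$.
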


  \begin{definition}
    An \emph{Artin $t$\dash motif} is an effective $t$\dash
    motif $M$ satisfying the above equivalent conditions.
  \end{definition}
  
  \begin{proof}[Proof of the Proposition]
   If $M$ is a sub-quotient of $M(V)$ then $M_{K^s}$ is a sub-quotient
   of $M(V_{K^s})\approx m\1$ and therefore $M_{K^s}\approx n\1$.
   
   Conversely, assume that $M_{K^s}$ has a basis of
   $\sigma$\dash invariant vectors.
   There exists some finite extension $K'/K$ inside $K^s$ such
   that this basis is already defined over $K'$.
   The natural map $K[t]\to K'[t]$
   defines the structure of a $K[t]$\dash module on $M'$. Denote
   it by $R_{K'/K} M'$ in order to distinguish it from the
   $K'[t]$\dash module $M'$. It is clear that
   $R_{K'/K}M'$ is naturally an effective $t$\dash motif over
   $K$ of rank $\rk(M)[K':K]$. (Call it
   the \emph{Weil restriction} of $M'$ from $K'$ to $K$.)
   But, $M$ is a submodule of $R_{K'/K}M'\otimes_K{K^s}$
   and the latter is isomorphic to $M(R_{K'/K}W)$ with
   $W$ the sum of a number of copies of $K'$ with the diagonal
   action of $\sigma$, whence the Proposition.
  \end{proof}

 The full subcategory $\tM^\circ_\cst(K)$ of $\tM^\circ(K)$
 consisting of the Artin $t$\dash motifs 
 is rigid abelian $k(t)$\dash linear and has a fibre functor
 \begin{equation}\label{constantfibre}
   M \rightsquigarrow 
  \big(M\otimes_{K[t]} K^s[t]\big)^\sigma \otimes_{k[t]} k(t)
 \end{equation}
 and with this fibre functor we have
 \begin{proposition}\label{consttannaka}
  $\tM^\circ_\cst(K)$ is neutral Tannakian $k(t)$\dash linear
  with fundamental group $G_K$.
 \end{proposition}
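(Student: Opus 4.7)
The plan is to upgrade the fibre functor (\ref{constantfibre}) to an equivalence of $k(t)$-linear tensor categories
\[ \tilde\omega\colon \tM^\circ_\cst(K) \xrightarrow{\sim} \mathrm{Rep}^{\mathrm{cts}}_{k(t)}(G_K), \]
from which the proposition follows directly: composing $\tilde\omega$ with the forgetful functor exhibits $\omega$ as a neutral fibre functor whose automorphism group scheme over $k(t)$ is the constant group $G_K$. The group $G_K$ acts on $K^s[t]$ via its natural action on $K^s$ (fixing $t$) in a way that commutes with $\sigma$, hence $G_K$ acts naturally on $\omega(M)$ for every Artin $t$-motif $M$. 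By Proposition \ref{cstmot}, any such $M$ is trivialised by a finite Galois extension $L/K$ inside $K^s$, so the action factors through $\Gal(L/K)$ and is continuous.

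That $\tilde\omega$ is faithful, exact, $k(t)$-linear and tensor descends from the corresponding properties of $\omega$ itself. The first two are a consequence of Theorem \ref{hanfaithful}, once we observe that every Artin $t$-motif is analytically trivial: fixing an embedding $K^s \hookrightarrow K^\dagger$, the isomorphism $M \otimes_{K[t]} K^s[t] \cong n\1$ induces a $\sigma$-invariant $\Kdagt$-basis of $M \otimes_{K[t]} \Kdagt$.

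The main technical step is essential surjectivity. Given $V \in \mathrm{Rep}^{\mathrm{cts}}_{k(t)}(G_K)$, the action factors through $\Gal(L/K)$ for some finite Galois $L/K$ inside $K^s$; averaging an arbitrary $k[t]$-lattice over this finite group gives a $\Gal(L/K)$-stable $k[t]$-lattice $V_0 \subseteq V$. Set
\[ M_0 \isbydef V_0 \otimes_{k[t]} L[t], \qquad \sigma \isbydef 1 \otimes \tau, \]
with the diagonal $\Gal(L/K)$-action, and put $M \isbydef M_0^{\Gal(L/K)}$. Galois descent along the finite étale extension $L[t]/K[t]$ makes $M$ a projective (hence free, since $K[t]$ is a PID) $K[t]$-module of rank $\dim_{k(t)} V$ with $M \otimes_{K[t]} L[t] = M_0$, and $\sigma$ descends to $M$ because it commutes with $\Gal(L/K)$. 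To verify the effectiveness condition I express the $k[t]$-basis of $V_0$ in a $K[t]$-basis of $M$ through a matrix $B \in \GL_r(L[t])$; since $\sigma$ fixes the $V_0$-basis, the functional equation forces
\[ \det\sigma = (\det B)^{1-q} \in L^\times \cap K[t] = K^\times, \]
so $M$ is an effective Artin $t$-motif, and tracing through the construction gives $\tilde\omega(M) \cong V$ as $G_K$-representations.

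Full faithfulness follows from the identity $\Hom_{\tM^\circ_\cst}(\1, M) = \omega(M)^{G_K}$: Galois descent gives $(M \otimes_{K[t]} K^s[t])^{G_K} = M$, whence $(M \otimes_{K[t]} K^s[t])^{\sigma, G_K} = M^\sigma$, and tensoring with $k(t)$ delivers the claim. The rigidity of $\tM^\circ$, which is preserved by $\tilde\omega$, together with the adjunction (\ref{adjunction}), extends this from Hom-to-$\1$ to all Hom-sets. The main obstacle is the determinant computation in essential surjectivity: showing that the Galois-descended module actually satisfies the effectiveness condition on $\det\sigma$, after which everything else is a matter of bookkeeping.
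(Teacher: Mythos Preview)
Your argument is correct, but it follows a genuinely different strategy from the paper's.

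The paper's proof is short and indirect: it leans on the Tannakian theorem for pairs $(V,\sigma)$ (with fundamental group $G_K$) already quoted from \cite{Pink06}. Full faithfulness of $\omega$ is inherited from that theorem after base change to $k(t)$, and essential surjectivity is obtained by the \emph{regular representation trick}: any continuous $k(t)$\dash representation of $G_K$ factors through a finite quotient $G$, hence is a sub-quotient of copies of $k(t)[G]=k[G]\otimes_k k(t)$, which visibly comes from a $k$\dash linear representation and therefore from some $M(V)$. No explicit descent computation is needed.

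You instead build the inverse functor by hand: choose a stable $k[t]$\dash lattice, form $V_0\otimes_{k[t]}L[t]$ with $\sigma=1\otimes\tau$, and Galois-descend to $K[t]$. This is more constructive and the determinant calculation $\det\sigma=(\det B)^{1-q}\in L^\times\cap K[t]=K^\times$ is a nice way to see effectiveness directly. Your full faithfulness via $\Hom(\1,M)=\omega(M)^{G_K}$ and rigidity is also a clean alternative.

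One point worth flagging: your proof of faithfulness and exactness invokes Theorem~\ref{hanfaithful}, which requires the analytic setup (in particular that $k[t]\to K$ be injective and a choice of $K^\dagger$). The paper emphasises immediately after the Proposition that analytic methods are \emph{not} needed here, and indeed its argument works for arbitrary $K$. You can easily repair this: faithfulness and exactness of $\omega$ follow directly from the fact that for an Artin $t$\dash motif $M$ one has $M\otimes_{K[t]}K^s[t]\approx n\1$, so $(M\otimes K^s[t])^\sigma$ is free of rank $n$ over $k[t]$, without ever mentioning $\Kdagt$.
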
 
 
 Note that it is not needed to use analytic methods to obtain
 a fibre functor on Artin $t$\dash motifs and in particular
 it is not needed to demand that $k[t]\to K$ be injective.
 
 \begin{proof}[Proof of the Proposition]
  The functor $M(V)\rightsquigarrow H(V)\otimes_k k(t)$
  induces a fully faithful embedding of $\tM^\circ_\cst(K)$ into
  the category of $k(t)$\dash linear representations of $G_K$.
  It will be essentially surjective as soon as every
  continuous $k(t)$\dash linear representation of $G_K$ is a
  sub-quotient of $H\otimes_k k(t)$ for some
  $k$\dash linear representation $H$. This is indeed so, since
  every (algebraic, or continuous) representation of $G_K$
  factors though a finite group $G$ and every
  representation of $G$ is a sub-quotient of the direct sum
  of a number of copies of the regular representation
  $k(t)[G]$, which is nothing but the regular representation
  $k[G]$ over $k$, tensored with $k(t)$.
 \end{proof}

\begin{void}
 Artin $t$\dash motifs are the $t$\dash counterparts of
 the algebro-geometric Artin motifs.
 Let $\ZZ\to K$ be any field.
 Consider the category of smooth and projective varieties $X$
 over $K$ that are of dimension zero. These are the spectra of
 the finite \'{e}tale $K$\dash algebras and by
 Grothendieck's
 formulation of Galois theory the category of such $X$ is
 equivalent to the category of finite $G_K$\dash
 sets. The motifs that are sub-quotients of the
 $h(X,\QQ)$ for zero-dimensional $X$ are called
 Artin  motifs. They form a category which is equivalent to
 the category of $\QQ$\dash linear
 representations of $G_K$, see \cite[\S 4.1]{Andre04}.
\end{void}

\subsection{Relation between $\Gamma_K$ and $\Gamma_{K^s}$}

 Suppose now that $k[t]\to K$ is actually injective. Choose
 $K^\dagger\supset K$ to be algebraically closed,  complete and with
 $\|\theta\|>1$. Let $K^s$ be the separable closure of $K$
 inside $K^\dagger$. For an Artin $t$\dash motif $M$
 we have that 
 \[ \big(M \otimes_{K[t]} K^s[t]\big)^\sigma \otimes_{k[t]} k(t) =
    \big(M \otimes_{K[t]} \Kdagtt\big)^\sigma. \]
 That is to say, $\tM(K)_\cst^\circ$
 is a full sub-category of $\tM(K)_\antr^\circ$ and the analytic
 fibre functor on the latter extends the algebraic
 fibre functor on the former.
 \begin{theorem}\label{arithgeom}
  There is a short exact sequence
  \[ 0 \to \Gamma_{K^s} \to \Gamma_K \to G_K \to 0 \]
  of affine group schemes over $k(t)$.
 \end{theorem}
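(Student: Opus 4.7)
The plan is to apply the standard Tannakian criterion (cf.\ \cite{Deligne82a}) for recognising a short exact sequence $1\to H\to G\to Q\to 1$ of affine group schemes from a diagram of representation categories. Concretely, I consider the pair of $k(t)$\dash linear tensor functors
\[
\tM^\circ_\cst(K) \xrightarrow{\iota} \tM^\circ_\antr(K) \xrightarrow{B} \tM^\circ_\antr(K^s),
\]
where $\iota$ is the full inclusion and $B$ is the base change $M\mapsto M\otimes_{K[t]}K^s[t]$ (the valuation on $K^s$ being the one inherited from $K^\dagger$). Both functors are compatible with their fibre functors: for $\iota$ this is the identification $(M\otimes_{K[t]}K^s[t])^\sigma\otimes_{k[t]}k(t)=(M\otimes_{K[t]}\Kdagtt)^\sigma$ noted just before the theorem, and for $B$ it is immediate. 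Tannakian duality therefore yields morphisms of affine group schemes $\Gamma_{K^s}\to \Gamma_K \to G_K$ whose composition is trivial, since any Artin $M$ satisfies $B(M)\cong n\1$.

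The surjectivity (faithful flatness) of $\Gamma_K\to G_K$ is equivalent to $\iota$ being fully faithful with essential image closed under subobjects. Full faithfulness is clear. For the closure: if $N\subset M$ with $M$ an Artin $t$\dash motif, then $N_{K^s}\subset M_{K^s}\cong n\1$; but in the abelian category $\tM^\circ_\antr(K^s)$ a subobject of $n\1$ is again a direct sum of copies of $\1$, since its image under the fibre functor is a $\Gamma_{K^s}$\dash stable subspace of $k(t)^n$, on which $\Gamma_{K^s}$ acts trivially. Hence $N_{K^s}\cong m\1$, and $N\in\tM^\circ_\cst(K)$ by Proposition \ref{cstmot}.

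Next, the induced map $\Gamma_{K^s}\to\ker(\Gamma_K\to G_K)$ is an isomorphism of group schemes provided (a) an object $M\in\tM^\circ_\antr(K)$ lies in the essential image of $\iota$ iff $B(M)$ is isomorphic to a sum of copies of $\1$, and (b) every object of $\tM^\circ_\antr(K^s)$ is a subquotient of $B(M)$ for some $M\in\tM^\circ_\antr(K)$. Condition (a) is exactly Proposition \ref{cstmot}.

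Condition (b) is the step I expect to be the main obstacle, and I would handle it by Weil restriction. Given $M'\in\tM^\circ_\antr(K^s)$, the underlying module and the action of $\sigma$ descend to some $\tilde M\in\tM^\circ_\antr(K')$ over a finite separable extension $K'/K$ inside $K^s$. Viewing $\tilde M$ as a $K[t]$\dash module through $K[t]\hookrightarrow K'[t]$ produces an effective $t$\dash motif $R_{K'/K}\tilde M$ over $K$, as in the proof of Proposition \ref{cstmot}. Since $K'/K$ is separable one has $K'\otimes_K K^\dagger \cong \prod_\sigma K^\dagger$, where $\sigma$ runs through the $K$\dash embeddings $K'\hookrightarrow K^\dagger$, and therefore
\[
R_{K'/K}\tilde M \otimes_{K}\Kdagt \;\cong\; \bigoplus_\sigma \sigma^{*}\bigl(\tilde M\otimes_{K'}\Kdagt\bigr),
\]
each summand of which is analytically trivial as a twist of an analytically trivial motif; hence $R_{K'/K}\tilde M$ is itself analytically trivial over $K$. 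The analogous decomposition of $B(R_{K'/K}\tilde M)$ over $K^s[t]$ exhibits $M'$ as a direct summand via the identity embedding $K'\hookrightarrow K^s$, establishing (b). With all three conditions verified the Tannakian criterion delivers the desired short exact sequence.
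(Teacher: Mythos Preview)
Your argument is correct and follows essentially the same route as the paper: surjectivity of $\Gamma_K\to G_K$ from the closure of Artin $t$\dash motifs under subobjects, the closed immersion $\Gamma_{K^s}\hookrightarrow\Gamma_K$ via Weil restriction from a finite subextension, and exactness in the middle from Proposition~\ref{cstmot}. The only differences are cosmetic: you package the last two steps as a single Tannakian criterion for $\Gamma_{K^s}\cong\ker(\Gamma_K\to G_K)$, and you spell out the analytic triviality of the Weil restriction, which the paper leaves implicit.
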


\begin{proof}
 The full subcategory $\tM_\cst^\circ(K)$ of $\tM_\antr^\circ(K)$
 is Tannakian with fundamental group $G_K$ (\ref{consttannaka})
 and is closed under
 sub-quotients in $\tM_\antr^\circ$ by definition.
 This implies the existence of a faithfully flat, and hence
 surjective, morphism $\Gamma_K\to G_K$ of affine group
 schemes. (see for example
 \cite[Prop. 2.21 (a)]{Deligne82a}.)

 If $M$ is an effective $t$\dash motif
   over $K^s$, then it has a model $M'$ over a finite
   extension $K'$ of $K$. The $t$\dash motif $M$ is a submotif of
   $R_{K'/K}M'\otimes_K{K^s}$. Thus
   every $t$\dash motif over $K^s$ is a
   submotif of a $t$\dash motif that is already
   defined over $K$. It follows that the
   fully faithful functor $M\rightsquigarrow M_{K^s}$ from
   $\tM^\circ_\antr(K)$ to $\tM^\circ_\antr(K^s)$ defines
   a closed immersion $\Gamma_{K^s}\to \Gamma_K$.
   (see \cite[Prop. 2.21 (b)]{Deligne82a})

  The sequence is exact in the middle if and only if the
   representations of $\Gamma_K$ on which $\Gamma_{K^s}$ acts
   trivially are precisely those coming from a representation
   of $G_K$. In other words, the exactness is equivalent with
   the statement that a $t$\dash motif $M$ over $K$ satisfies
   $M_{K^s}\approx n\1$ for some $n$ if and only if it is an
   Artin $t$\dash motif. This was one of the equivalent
   definitions of the notion of an Artin $t$\dash motif
   (see \ref{cstmot}).
\end{proof}

\section{Weights}

\subsection{Dieudonn\'{e} {\it t}-Modules}

 As usual $k$ is a finite field of $q$ elements and
 $K$ a field containing $k$. Denote by $\tau$ the continuous
 endomorphism of the field of Laurent series $K((t^{-1}))$ that
 fixes $t^{-1}$ and that restricts to the $q$\dash th power 
 map on $K$.
 \begin{definition}
  A \emph{Dieudonn\'{e} $t$\dash module}\footnotemark
  \ over $K$ is a pair $(V,\sigma)$ of
  \begin{itemize}
   \item a finite-dimensional $K((t^{-1}))$\dash vector space $V$
         and
   \item an additive map $\sigma:V\to V$ satisfying
         $\sigma(fv)=\tau(f)\sigma(v)$ for all $f\in K((t^{-1}))$
         and all $v\in V$, 
  \end{itemize}
  such that $K\sigma(V)=V$.
 \end{definition}
 A morphism of Dieudonn\'{e} $t$\dash modules is of course a
 $K((t^{-1}))$\dash linear map commuting with $\sigma$.

 Dieudonn\'{e} $t$\dash modules are easily classified, at least over
 a separably closed field. The main `building blocks' are the
 following modules:
 \begin{definition}\label{dieuclass}
  Let $\lambda=s/r$ be a rational number with
  $(r,s)=1$ and $r>0$. The 
  Dieudonn\'{e} $t$\dash module $V_\lambda$ is defined to be the
  pair $(V_\lambda,\sigma)$ with 
  \begin{itemize}
   \item $V_\lambda\isbydef K((t^{-1}))e_1\oplus\ldots\oplus
          K((t^{-1}))e_r$
   \item $\sigma(e_i)\isbydef e_{i+1}$ ($i<r$) and
	  $\sigma(e_r)\isbydef t^se_1$
  \end{itemize}
 \end{definition}
 The classification states:
 \begin{proposition}
  If $V$ is a  Dieudonn\'{e} $t$\dash module over a separably closed
  field $K$ then there exist rational numbers
  $\lambda_1,\cdots,\lambda_n$ such that
  \begin{itemize}
   \item $V \approx V_{\lambda_1}\oplus\cdots\oplus
                    V_{\lambda_n}$, and,
   \item the $t^{-1}$\dash adic valuations of the roots of
         the characteristic polynomial of $\sigma$ expressed
         on \emph{any} $K((t^{-1}))$\dash basis are
         $\{-\lambda_i\}_i$, each counted with multiplicity
         $\dim V_{\lambda_i}$.
  \end{itemize}
  If $\lambda\neq\mu$ then $\Hom(V_\lambda,V_\mu)=0$.
  For all $\lambda$, the ring $\End(V_{\lambda})$ is a
  division algebra over $k((t^{-1}))$. Its Brauer class is
  $\lambda+\ZZ\in\QQ/\ZZ=\Br(k((t^{-1})))$.
 \end{proposition}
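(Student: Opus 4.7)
The plan is to adapt the Dieudonn\'{e}--Manin classification of $F$-isocrystals. The coefficient ring $K((t^{-1}))$ with its $t^{-1}$-adic valuation plays the role of the Witt vectors, and the $V_\lambda$ are the analogues of the simple isocrystals of slope $\lambda$. The separable closedness of $K$ suffices because the semi-linear equations one must solve are already separable, just as in the Theorem at the start of \S\ref{artintmotifs}.

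First I would develop a Newton-polygon formalism. Picking a $K((t^{-1}))$-basis of $V$, write the action of $\sigma$ on coordinates as $v\mapsto S\tau(v)$ for some $S\in\GL_n(K((t^{-1})))$. The $t^{-1}$-adic valuations of the roots of the characteristic polynomial of $S$ are invariant under change of basis, which replaces $S$ by $A^{-1}S\tau(A)$ and preserves the Newton polygon (since $\tau$ fixes the valuation), yielding well-defined slopes $\{-\lambda_i\}$. For each slope $\lambda=s/r$ in lowest terms I would then construct a vector $v\in V$ satisfying exactly $\sigma^r(v)=t^s v$; this is a linear $\tau^r$-semi-linear equation whose solutions form a $k_r((t^{-1}))$-vector space of the expected dimension over a separable closure, by a version of Lang's theorem for $\GL_n$ over $K^s((t^{-1}))$. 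The iterates $v,\sigma(v),\ldots,\sigma^{r-1}(v)$ then span a submodule of $V$ isomorphic to $V_\lambda$.

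To complete the decomposition I would split the embedded $V_\lambda$ off as a direct summand and induct on $\dim V$. This splitting is the technical heart of the argument and the main obstacle. It is equivalent to the vanishing of $\Ext^1(V_\mu,V_\lambda)$ in the category of Dieudonn\'{e} $t$-modules: for $\lambda\neq\mu$ the twisted operator defining the extension class is invertible by a slope mismatch; for $\lambda=\mu$ one peels off copies of $V_\lambda$ one at a time using the slope filtration within the extension. Either case ultimately reduces to solving a semi-linear equation over $K^s((t^{-1}))$.

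The remaining assertions follow quickly. Claim (ii) reduces by additivity of Newton polygons over direct sums to the direct computation $\chi_S(X)=X^r-t^s$ on $V_\lambda$, whose roots have valuation $-s/r$. For $\lambda\neq\mu$ a non-zero morphism $V_\lambda\to V_\mu$ would force a coincidence of slopes, so $\Hom(V_\lambda,V_\mu)=0$. Finally, a direct computation identifies $\End(V_\lambda)$ with the cyclic algebra $(k_r((t^{-1}))/k((t^{-1})),\tau,t^s)$, where $k_r$ denotes the degree-$r$ extension of $k$ inside $K^s$: it contains $k_r((t^{-1}))$ as diagonal scalars and an element $\Pi$ acting by $e_i\mapsto e_{i+1}$, satisfying $\Pi^r=t^s$ and $\Pi c=\tau(c)\Pi$ for all $c\in k_r((t^{-1}))$. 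The Hasse invariant of this cyclic algebra over the local field $k((t^{-1}))$ is $s/r+\ZZ=\lambda+\ZZ$, and since $(r,s)=1$ this element of $\Br(k((t^{-1})))$ has order exactly $r$, so the $r^2$-dimensional algebra $\End(V_\lambda)$ is a division algebra.
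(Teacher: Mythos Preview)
The paper does not actually prove this proposition: its entire proof is a citation to \cite[Appendix B]{Laumon96}, together with the observation that Laumon's arguments, though stated for a particular $K$, use nothing beyond separable closedness. Your sketch, by contrast, reproduces the Dieudonn\'{e}--Manin strategy that such a reference would contain, so in substance you are supplying what the paper outsources. The overall architecture---Newton slopes, construction of a $V_\lambda$ inside $V$ via a Lang-type argument, inductive splitting, and the cyclic-algebra description of $\End(V_\lambda)$---is the standard one and is correct.

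Two places deserve a little more care if you flesh this out. First, the invariance of the Newton polygon under $\sigma$\dash conjugation $S\mapsto A^{-1}S\tau(A)$ is not as immediate as ``$\tau$ fixes the valuation'': the characteristic polynomial itself changes, and one usually proves the invariance of its Newton polygon either via a lattice characterisation of slopes (as in Proposition~\ref{monoweight}) or as a consequence of the classification, so be wary of circularity. Second, in your cyclic-algebra presentation the commutation relation between $\Pi$ and the diagonal copy of $k_r((t^{-1}))$ comes out as $\Pi c=\tau^{-1}(c)\Pi$ rather than $\Pi c=\tau(c)\Pi$ with your conventions; this does not affect the Brauer class, but it is worth getting the sign of the generator of $\Gal(k_r/k)$ right. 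Neither point undermines the argument.
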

 Note that this classification is formally identical to
 the classification of the classical ($p$\dash adic) Dieudonn\'{e}
 modules \cite{Dieudonne57}.
 \begin{proof}
  This is shown in \cite[Appendix B]{Laumon96}. Although
  the statements therein are made only for a particular field
  $K$, nowhere do the proofs make use of anything stronger
  then the separably closedness of $K$.
 \end{proof}

 The following characterisation of $V_\lambda$ is useful.
 \begin{proposition}\label{monoweight}
  Let $V$ be a Dieudonn\'{e} $t$\dash module over a separably
  closed field $K$ and $\lambda$ a rational number.
  The following are equivalent:
  \begin{itemize}
    \item $V\approx V_\lambda\oplus V_\lambda\oplus
               \cdots \oplus V_\lambda$;
    \item there exists a lattice $\Lambda\subset V$ such
          that $\sigma^r(\Lambda)=t^s\Lambda$ where $r$ and $s$
          are coprime integers with $\lambda=s/r$.
  \end{itemize}
 \end{proposition}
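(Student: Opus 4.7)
Forward direction. Suppose $V\cong V_\lambda^{\oplus n}$. In each summand take the basis $e_1,\ldots,e_r$ of Definition~\ref{dieuclass} and set $\Lambda:=\bigoplus_{i,j}K[[t^{-1}]]\,e_i^{(j)}$. A direct induction using $\sigma(e_i)=e_{i+1}$ for $i<r$ and $\sigma(e_r)=t^s e_1$ gives $\sigma^r(e_i^{(j)})=t^s e_i^{(j)}$ for every $i$ and $j$, so the $K[[t^{-1}]]$-submodule generated by $\sigma^r(\Lambda)$ is exactly $t^s\Lambda$.

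Converse. The plan is to reduce to a $V$ with a single slope and then to compare two natural computations of $\det\sigma^r$. The preceding classification canonically decomposes $V=\bigoplus_\mu V^{(\mu)}$ into isotypic pieces $V^{(\mu)}:=\bigoplus_{\lambda_i=\mu}V_{\lambda_i}$. Because $\Hom(V_\lambda,V_{\lambda'})=0$ for $\lambda\neq\lambda'$ the complementary summand of $V^{(\mu)}$ is $\sigma$-stable, so the canonical projection $\pi_\mu:V\to V^{(\mu)}$ commutes with $\sigma$ and hence sends $\Lambda$ to a lattice $\pi_\mu(\Lambda)\subset V^{(\mu)}$ still satisfying $\sigma^r(\pi_\mu(\Lambda))=t^s\pi_\mu(\Lambda)$. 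It therefore suffices to treat the case $V=V_\mu^{\oplus m}$ with $\mu=a/b$ in lowest terms and show that such a $\Lambda$ forces $\mu=s/r$.

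For this I would compute the $t^{-1}$-adic valuation of $\det\sigma^r$ in two different $K((t^{-1}))$-bases. This valuation is basis-independent: a change of basis by $C$ transforms the matrix $B$ of $\sigma$ into $C^{-1}B\,\tau(C)$, and $\det(C^{-1})\tau(\det C)$ has $t^{-1}$-valuation $0$ since $\tau$ fixes $t$. In the defining basis of each $V_\mu$-summand the matrix of $\sigma$ is a cyclic shift with last entry $t^a$, giving valuation $-am$ for $\det\sigma$ on $V_\mu^{\oplus m}$; using $\det\sigma^r=\prod_{i=0}^{r-1}\tau^i(\det\sigma)$, one obtains valuation $-rma$ for $\det\sigma^r$. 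In a $K[[t^{-1}]]$-basis of $\Lambda$, on the other hand, the relation $\sigma^r(\Lambda)=t^s\Lambda$ forces the matrix of $\sigma^r$ to be of the form $t^s U$ with $U\in\GL_{mb}(K[[t^{-1}]])$, whose determinant has valuation $-smb$. Equating $-rma=-smb$ yields $\mu=a/b=s/r$, so only that slope occurs in $V$ and $V\cong V_{s/r}^{\oplus n}$.

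The main delicate point throughout is the $\tau$-semilinearity of $\sigma^r$: the equality $\sigma^r(\Lambda)=t^s\Lambda$ should be read as equality of the $K[[t^{-1}]]$-submodules generated (the $K[[t^{-1}]]$-saturation of the additive group $\sigma^r(\Lambda)$), which coincides with the set-theoretic reading when $K$ is perfect. All the determinantal identities above live in $K((t^{-1}))$ and are unaffected by this technicality, because $\tau$ preserves the $t^{-1}$-adic valuation.
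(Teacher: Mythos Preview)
Your argument is correct. The forward direction is identical to the paper's. For the converse the paper argues in one stroke: the operator $t^{-s}\sigma^r$ carries a $K[[t^{-1}]]$\dash basis of $\Lambda$ to another such basis, hence all roots of its characteristic polynomial have $t^{-1}$\dash adic valuation~$0$; by the second clause of the classification this forces every slope of $(V,\sigma)$ to equal $s/r$. You instead first project $\Lambda$ onto each isotypic summand (using that the isotypic decomposition is canonical and $\sigma$\dash stable) and then compare only the \emph{determinant} of $\sigma^r$ in two bases. This is the same valuation bookkeeping, just organised differently: the paper uses the full eigenvalue statement from the classification and needs no preliminary reduction, while your route is slightly more self-contained (it uses only $\det$ and the existence of the slope decomposition) at the cost of the extra projection step. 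Either way the content is the same computation $r\cdot(\text{slope})=s$.
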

 \begin{proof}
  \emph{One $\Rightarrow$ Two.} If $V=V_\lambda$ and
  $(e_i)$ the basis that occurs in its definition
  (\ref{dieuclass}) then the lattice generated by the same
  basis $(e_i)$ has the required property. For
  $V=V_\lambda\oplus\cdots\oplus V_\lambda$ it thus suffices
  to take the lattice $\Lambda\oplus\cdots\oplus\Lambda$.
  
  \emph{Two $\Rightarrow$ One.} The operator $t^{-s}\sigma^r$
  transforms a $K[[t^{-1}]]$\dash basis of $\Lambda$ into a
  new $K[[t^{-1}]]$\dash basis of $\Lambda$ and
  therefore has eigenvalues of valuation $0$. 
  \end{proof}

\subsection{Weights}

\begin{void}
 Let $K$ be separably closed. Let $M$ be an effective
 $t$\dash motif over $K$. Then
  \[ M((t^{-1}))\isbydef M \otimes_{K[t]} K((t^{-1})) =
                         M(t) \otimes_{K(t)} K((t^{-1})) \]
 is a Dieudonn\'{e} $t$\dash module. The displayed equality
 shows that it only depends on the isogeny class of $M$.
 By the classification of Dieudonn\'{e} $t$\dash
 modules (\ref{dieuclass})
 there exist rational numbers $\lambda_1,\ldots,\lambda_n$
 such that
  \[ M((t^{-1})) \approx V_{\lambda_1} \oplus\cdots\oplus
                         V_{\lambda_n}. \]
 We call these rational numbers the \emph{weights} of $M$. 
 If $K$ is not separably closed then we define the weights
 of an effective $t$\dash motif $M$ to be the weights of
 $M_{K^s}$. This clearly does not depend on the choice of a
 separable closure.
 
 We say that $M$ is \emph{pure of weight $\lambda$} if
 the only weight occurring is $\lambda$. By Proposition
 \ref{monoweight}, this  coincides with the definition as given
 in \cite{Anderson86}.
\end{void} 

 We now collect a number of facts related to the notions
 of weights and purity. They are either immediate consequences of
 the definitions or well-known facts established in the literature.
 \begin{proposition}\label{pureprop}
  We have the following:
  \begin{itemize}
   \item If $M$ is pure of weight $\lambda$ then every
         sub-quotient of $M$ is pure of weight $\lambda$;
   \item If $M$ has a filtration in which all successive quotients
         are pure of weight $\lambda$, then $M$ is pure of weight
         $\lambda$;
   \item If the sets of weights of $M_1$ and $M_2$ are disjoint then
         $\Hom(M_1,M_2)=0$;
   \item $C$ is pure of weight $1$;
   \item The weights of $M_1\otimes M_2$ are the sums of weights
         of $M_1$ with those of $M_2$;
   \item The weight of a pure effective $t$\dash motif $M$ is
         non-negative.
  \end{itemize}
 \end{proposition}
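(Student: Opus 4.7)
The plan is to reduce every one of the six assertions to a corresponding statement about Dieudonn\'e $t$-modules $M((t^{-1}))$ over a separably closed field, where the classification of such modules and the lattice criterion \ref{monoweight} apply directly. Since the formation of $M((t^{-1}))$ is exact in $M$, compatible with $\otimes$, and commutes with base change, and since the weights of $M$ are by definition those of $M_{K^s}$, I would first assume throughout that $K=K^s$.

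Items (1), (3) and (4) are then essentially immediate. For (1), the ring $\End(V_\lambda)$ is a division algebra, so $V_\lambda$ is simple and $V_\lambda^n$ is semisimple; hence every sub-quotient of $M((t^{-1}))\approx V_\lambda^n$ is isomorphic to some $V_\lambda^k$, which is pure of weight $\lambda$. For (3), a non-zero morphism $f:M_1\to M_2$ remains non-zero after the flat base change to $K((t^{-1}))$, contradicting $\Hom(V_\lambda,V_\mu)=0$ for $\lambda\neq\mu$. For (4), take the lattice $\Lambda=K[[t^{-1}]]\,e\subset C((t^{-1}))$; since $\tau$ preserves $K[[t^{-1}]]$ and $1-\theta t^{-1}$ is a unit in $K[[t^{-1}]]$, we have $\sigma(\Lambda)=(t-\theta)\tau(\Lambda)=t(1-\theta t^{-1})\Lambda=t\Lambda$, so \ref{monoweight} with $r=s=1$ identifies $C$ as pure of weight $1$.

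For (2), I would choose bases of $M'$, $M$, $M''$ compatible with a short exact sequence $0\to M'\to M\to M''\to 0$; then $\sigma$ on $M((t^{-1}))$ is represented by a block-triangular matrix whose characteristic polynomial is the product of those of $\sigma|_{M'}$ and $\sigma|_{M''}$, and the second bullet of the classification identifies the multiset of weights of $M$ with the union of those of $M'$ and $M''$. An induction handles longer filtrations. For (5), given lattices $\Lambda_i\subset V_{\lambda_i}$ with $\sigma^{r_i}(\Lambda_i)=t^{s_i}\Lambda_i$ and a common multiple $r=a_i r_i$ of $r_1$ and $r_2$, the tensor-product lattice satisfies
\[ \sigma^r(\Lambda_1\otimes\Lambda_2)=t^{a_1 s_1+a_2 s_2}(\Lambda_1\otimes\Lambda_2)=t^{r(\lambda_1+\lambda_2)}(\Lambda_1\otimes\Lambda_2), \]
so \ref{monoweight} shows $V_{\lambda_1}\otimes V_{\lambda_2}$ is pure of weight $\lambda_1+\lambda_2$; decomposing each $M_i((t^{-1}))$ into $V_\lambda$-summands then yields the general statement. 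For (6), effectiveness provides a matrix $S$ of $\sigma$ with $\det S=c(t-\theta)^d$, $c\in K^\times$ and $d\geq 0$; the sum of the $t^{-1}$-adic valuations of the roots of $\det(XI-S)$ equals $v_{t^{-1}}(\det S)=-d$, which by the classification equals $-\sum_i(\dim V_{\lambda_i})\lambda_i$. For $M$ pure of weight $\lambda$ and rank $r$, this gives $r\lambda=d\geq 0$.

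The main obstacle, such as it is, lies in items (2) and (6): both rely on extracting weights from the matrix of the semi-linear operator $\sigma$ on a chosen basis, and one must keep in mind that although this matrix and its characteristic polynomial depend on the basis, the multiset of $t^{-1}$-adic valuations of the roots does not, precisely as recorded in the classification of Dieudonn\'e $t$-modules.
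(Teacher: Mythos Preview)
Your proof is correct and follows the paper's strategy: reduce each item to a statement about $M((t^{-1}))$ over a separably closed field and invoke the classification of Dieudonn\'e $t$-modules. The paper's arguments differ only in minor details---it handles (5) by observing that the roots of the characteristic polynomial multiply under $\otimes$, and (6) by passing to the top exterior power and reducing to rank one---and these are essentially equivalent to your lattice and determinant arguments. One small caveat in your treatment of (5): the pair $(r,\,a_1s_1+a_2s_2)$ need not be coprime, so you are tacitly using that the implication ``lattice $\Rightarrow$ pure'' in Proposition~\ref{monoweight} holds without the coprimality hypothesis; this is harmless, as that direction of the proof does not use it.
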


 \begin{proof}[Proofs]
  \emph{One.} If $M'$ is a sub-quotient of $M$ then
  $M'((t^{-1}))$ is a sub-quotient of $M((t^{-1}))$ and the claimed
  statement follows at once from the Classification \ref{dieuclass}.
  
  \emph{Two.} A normal series of $M$ induces a normal series of
  $M((t^{-1}))$ and again the contention follows from
  \ref{dieuclass}.
  
  \emph{Three.} $\Hom(M_1,M_2)$ is a submodule of
  $\Hom(M_1((t^{-1})),M_2((t^{-1})))$, which is zero by
  \ref{dieuclass}.
    
  \emph{Four.}  The valuation of $t-\theta$ at $t^{-1}$ is $-1$.

  \emph{Five.} Immediate since the zeroes of the characteristic
   polynomials are multiplied.
   
  \emph{Six.} Clear for rank one $M$, for a general $M$ take
  the top exterior power.
 \end{proof}

\begin{void}
 If $M$ is an effective $t$\dash motif and
  \[ M((t^{-1}))\approx V_{\lambda_1} \oplus\cdots\oplus
                        V_{\lambda_n} \]
 then by the Proposition
  \[ (M\otimes C)((t^{-1})) \approx
                     V_{\lambda_1+1} \oplus\cdots\oplus
                     V_{\lambda_n+1}. \]
 It is thus natural to define the \emph{weights} of a
 $t$\dash motif $(M,i)$ to be the set of $\lambda+i$
 where $\lambda$ runs through the weights of $M$. To be
 consistent, a $t$\dash motif $(M,i)$ is then said to be
 \emph{pure of weight $\lambda$} if and only if $M$ is
 pure of weight $\lambda-i$.
\end{void}

\subsection{Finite Generation over $K[\sigma]$}\label{subsecfingen}

Let $K$ be algebraically closed. 
\begin{theorem}
 If $M$ is an effective $t$\dash motif and all weights 
 of $M$ are positive then $M$ is finitely generated
 as a $K[\sigma]$\dash module.
\end{theorem}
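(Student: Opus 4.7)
The plan is to exhibit a finite-dimensional $K$-subspace $V\subset M$ with $M=\sum_{n\geq 0}\sigma^n(V)$; taking $S$ to be a $K$-basis of $V$ then yields the finite set required by Anderson's condition (iv). The first step is to observe that because $K$ is algebraically closed, $\tau$ is bijective on $K[t]$, so for every $N\geq 1$ and every $f\in K[t]$ the identity $f\cdot\sigma^N(m)=\sigma^N\bigl(\tau^{-N}(f)\cdot m\bigr)$ shows that $\sigma^N(M)\subset M$ is actually a $K[t]$-submodule. Its $K[t]$-colength is the $t$-degree of the determinant of the $K[t]$-linearisation of $\sigma^N$, which equals $Nd$ with $d=\deg\det\sigma$ the sum of the weights. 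Hence $M/\sigma^N(M)$ has $K$-dimension $Nd$, and any $K$-complement $V$ to $\sigma^N(M)$ is a finite-dimensional candidate.

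The positive-weight hypothesis enters through a contraction property for a suitable $N$. I would decompose $M((t^{-1}))=\bigoplus V_{\lambda_i}$ by the classification of Dieudonn\'{e} $t$-modules (cf.\ \ref{dieuclass}), use Proposition \ref{monoweight} to choose $K[[t^{-1}]]$-lattices $\Lambda_i$ in each pure piece with $\sigma^{r_i}(\Lambda_i)=t^{s_i}\Lambda_i$ where $\lambda_i=s_i/r_i$, set $\Lambda:=\bigoplus\Lambda_i$, and let $v_\Lambda$ denote the associated lattice valuation on $M((t^{-1}))$. Choosing $N$ to be a common multiple of the $r_i$ large enough that $N\lambda_i\geq 1$ for every $i$ (which is possible because every $\lambda_i>0$), the per-piece identity $v_\Lambda(\sigma^N(m_i))=v_\Lambda(m_i)-N\lambda_i$ combines with the direct-sum definition of $v_\Lambda$ to give the key estimate
\[
 v_\Lambda\bigl(\sigma^N(m)\bigr)\;=\;\min_i\bigl(v_\Lambda(m_i)-N\lambda_i\bigr)\;\leq\;v_\Lambda(m)-N\lambda_{\min}\;\leq\;v_\Lambda(m)-1.
\]
Thus $\sigma^N$ strictly lowers $v_\Lambda$ by at least one, and equivalently $\sigma^{-N}$ (well-defined on $M((t^{-1}))$ since all weights are positive) strictly raises it by at least one.

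With this $N$ and $V$ as in paragraph one, I would iterate: for $m\in M$, write $m=v_0+\sigma^N(m_0)$ with $v_0\in V$ and $m_0\in M$, then $m_{n-1}=v_n+\sigma^N(m_n)$, etc., producing $m=\sum_{n=0}^{L}\sigma^{nN}(v_n)+\sigma^{(L+1)N}(m_L)$. Setting $C_V:=\min_{0\neq v\in V}v_\Lambda(v)$ (finite because $V$ is finite-dimensional and $v_\Lambda$ is $K^\times$-invariant), the valuation estimate together with the standard inequality $v_\Lambda(a-b)\geq\min(v_\Lambda(a),v_\Lambda(b))$ yields $v_\Lambda(m_n)\geq\min\bigl(v_\Lambda(m_{n-1}),C_V\bigr)+1$, so $v_\Lambda(m_n)\to\infty$. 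Since elements of $M$ are polynomials in $t$ whereas $t^{-k}\Lambda$ consists of elements whose $t$-degrees, in any $K[t]$-basis of $M$, are bounded above by a constant minus $k$, the intersection $M\cap t^{-k}\Lambda$ vanishes for $k$ large. Hence $m_n=0$ for some $n$, which gives $m=\sum_{n=0}^{L}\sigma^{nN}(v_n)\in\sum_n\sigma^n(V)$, proving the theorem.

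The main obstacle I anticipate is the valuation estimate of paragraph two: one must verify that $\sigma$ preserves the Dieudonn\'{e} decomposition so pure components can be tracked independently, and exploit the positivity of every $\lambda_i$ together with the common-multiple choice of $N$ to get a uniform global decrease via the elementary bound $\min_i(A_i-B_i)\leq\min_iA_i-\min_iB_i$. The termination step of paragraph three further requires a bounded comparison between $\Lambda$ and a ``standard'' polynomial lattice attached to a $K[t]$-basis of $M$, ensuring that elements of $M$ with sufficiently large $v_\Lambda$ must vanish.
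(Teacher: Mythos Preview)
There is a genuine gap. Your recursion $v_\Lambda(m_n)\geq\min\bigl(v_\Lambda(m_{n-1}),C_V\bigr)+1$ only forces $v_\Lambda(m_n)\geq C_V+1$ eventually; once $v_\Lambda(m_{n-1})>C_V$ the bound becomes $v_\Lambda(m_n)\geq C_V+1$ and can stagnate there forever, so you cannot conclude $m_n=0$. Worse, the underlying claim---that an \emph{arbitrary} $K$-complement $V$ of $\sigma^N(M)$ satisfies $\sum_{n\geq 0}\sigma^{nN}(V)=M$---is false. Take $M=C$, $N=1$, and $V=K\!\cdot\! te$ (a complement to $\sigma(M)=(t-\theta)M$ since $\theta\neq 0$). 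Because $t$ is $\tau$-fixed one has $\sigma^n(V)=t\cdot K\sigma^n(e)$, whence $\sum_{n\geq 0}\sigma^n(V)=t\,K[t]e\subsetneq M$. Tracing your iteration from $m=e$ gives $m_n=c_ne$ with each $c_n\in K^\times$ (explicitly $c_{n+1}^{\,q}=-c_n/\theta$); here $v_\Lambda(m_n)=0$ for all $n$ while $C_V=-1$, and the inequality $0\geq\min(0,-1)+1$ is satisfied with equality indefinitely. The ultrametric estimate $v_\Lambda(m_{n-1}-v_n)\geq\min(v_\Lambda(m_{n-1}),v_\Lambda(v_n))$ discards exactly the information you need when $v_\Lambda(v_n)<v_\Lambda(m_{n-1})$.

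The paper's argument is organised differently and avoids this trap. Writing the weights with a common denominator $v$ and setting $u$ to be the least numerator, it takes $\Lambda$ so that $t^u\Lambda\subset\sigma^v\Lambda$ and defines the increasing exhaustive filtration $M_n:=M\cap t^{nu}\Lambda$ by finite-dimensional $K$-subspaces. The key step is the inclusion $M_{n+1}\subset M_n+\sigma^vM_n$ for $n$ large, obtained from $t^u\Lambda\subset\sigma^v\Lambda$ together with $M+t^{nu}\Lambda=M((t^{-1}))$; iterating yields $M=\sum_{j\geq 0}\sigma^{vj}M_{n_0}$, so any $M_{n_0}$ with $n_0$ large enough generates. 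The essential difference is that the generating subspace is cut out \emph{by} the lattice valuation rather than chosen as an abstract complement to $\sigma^N(M)$, and it is this compatibility that makes the induction close.
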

\begin{corollary}
 If $M$ is an effective $t$\dash motif then for all
 $n$ sufficiently large $M\otimes C^n$ is finitely generated
 over $K[\sigma]$.\qed
\end{corollary}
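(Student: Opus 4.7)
The plan is to derive the corollary directly from the theorem by using a Carlitz twist to shift all the weights into the positive range. First I would invoke Proposition~\ref{pureprop}: the Carlitz $t$-motif $C$ is pure of weight $1$, and weights add under tensor product. Consequently, the weights of $M \otimes C^n$ are precisely $\{\lambda_i + n\}_i$, where $\{\lambda_i\}_i$ is the set of weights of $M$.

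Next I would note two finiteness/positivity facts. The set of weights of $M$ is finite, since $M((t^{-1}))$ is a finite-dimensional $K((t^{-1}))$-vector space and the classification in \ref{dieuclass} gives only finitely many summands. Hence there exists an $n_0$ such that for every $n \geq n_0$ one has $\lambda_i + n > 0$ for all $i$. Moreover, $M \otimes C^n$ is still an effective $t$-motif: if $M$ has rank $r$ and $\sigma$ on $M$ has determinant $u(t-\theta)^d$ with $u \in K^\times$, then the determinant of $\sigma$ on $M \otimes C^n$ is $u(t-\theta)^{d + nr}$ up to a unit, which is again a power of $t-\theta$ times a unit.

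At this point the hypotheses of the theorem apply to the effective $t$-motif $M \otimes C^n$, and the theorem yields its finite generation over $K[\sigma]$, which is exactly the content of the corollary. I do not expect any genuine obstacle: the whole point of the corollary is that the constraint ``all weights positive'' in the theorem can always be arranged by a sufficiently large Carlitz twist, and this deduction is entirely formal once the weight-shift property of $C$ and the effectiveness of $M \otimes C^n$ are in hand.
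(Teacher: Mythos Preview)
Your proposal is correct and is exactly the argument the paper has in mind: the corollary is marked \qed\ because it follows immediately from the theorem once one uses Proposition~\ref{pureprop} to see that tensoring with $C^n$ shifts all weights by $n$, so a large enough $n$ makes them positive. Your verification that $M\otimes C^n$ remains effective is already covered by the general fact that $\tM_\eff$ is closed under tensor products, but it does no harm.
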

\begin{remark}
 It follows in particular that the analytically trivial
 effective $t$\dash motifs that are finitely generated 
 over $K[\sigma]$ generate the Tannakian category
 $\tM^\circ_\antr$, and that the $t$\dash motivic Galois
 groups constructed here coincide with those of
 \cite{Papanikolas08}. 
\end{remark}
\begin{proof}[Proof of the Theorem]
 (\emph{Cf.} \cite[Prop. 1.9.2]{Anderson86})
 There is an isomorphism
  \[ M((t^{-1})) \approx V_{u_1/v} \oplus \cdots 
                         \oplus V_{u_k/v}. \]
 Let $\Lambda$ be the $K[[t^{-1}]]$\dash lattice
 in $M((t^{-1}))$ that corresponds with the standard lattice
 in the right hand side (see \ref{monoweight}).
 Let $u$ be the minimum of the $u_i$.
 By the hypothesis $u>0$. Then
  \[ t^u \Lambda \subset \sigma^v \Lambda. \]

 Define an increasing filtration 
  \[ M_0 \subset M_1 \subset M_2 \subset \cdots \]
 by 
  \[ M_n \isbydef M \cap t^{nu}\Lambda, \]
 the intersection taken inside $M((t^{-1}))$. Clearly
 $M=\cup_n M_n$.
 
 \emph{Claim:} $M_{n+1} \subset M_n + \sigma^v M_n$ for all
 sufficiently big $n$.
 
 Since the $M_n$ are of finite dimension over $K$, the claim
 implies at once that $M$ is finitely generated over $K[\sigma]$.
 
 To prove the claim, note that for all $n$ sufficiently large
 \begin{equation}\label{sufflarge} 
   M + t^n\Lambda = M((t^{-1})).
 \end{equation}
 For such $n$ we have
 \begin{eqnarray*}
   M_{n+1} &=&     M \cap t^{nu}t^u\Lambda \\
       &\subset&  M \cap t^{nu}\sigma^v\Lambda \\
       &\subset& (M \cap t^{nu})
           +(\sigma^vM \cap \sigma^vt^{nu}\Lambda)\\
       &=& M_n + \sigma^v M_n
\end{eqnarray*}
 where the second inclusion needs (\ref{sufflarge}).
\end{proof}

\section{Connected Components of $\Gamma$}\label{concomp}

The morphism $k[t]\to K$ is assumed to be injective.

\subsection{The Tate Conjecture}

Let $\lambda$ be a monic irreducible element of $k[t]$. Denote
by $k(t)_\lambda$ the $\lambda$-adic completion of $k(t)$.
To an effective $t$\dash motif $M=(M,\sigma)$ over $K$ we
associate a
$\lambda$\dash adic Galois representation as follows:
 \[ M \mapsto H_\lambda(M) :=
     \varprojlim_n (M_{K^s}/\lambda^n M_{K^s})^\sigma
      \otimes_{k[t]} k(t). \]
Here $(-)^\sigma$ denotes invariants for the induced action of
$\sigma$ on $M_{K_s}/\lambda^n M_{K^s}$ (note that $\sigma$
and $\lambda^n$  commute.) 
It follows from Lang's Theorem \cite{Lang56}
that $H_\lambda(M)$ is a vector space over $k(t)_\lambda$
whose dimension equals the rank of $M$. By transport of 
structure $G_K$ acts continuously on $H_\lambda(M)$.

The construction extends to give a $k(t)$\dash linear
$\otimes$\dash functor
$H_\lambda$ from $\tM^\circ$ to the category of finite dimensional
continuous $\lambda$\dash adic $G_K$\dash representations.

Let us now restrict attention to analytically trivial $t$\dash motifs.
Fix a $\lambda$ as above.
On the category $\tM^\circ_\antr$ we now have two fibre functors
to $k(t)_\lambda$\dash vector spaces: 
$H_\an(-,k(t)_\lambda)$, (defined as $H_\an(-,k(t))\otimes k(t)_\lambda$) and $H_\lambda(-)$. By the formalism of Tannakian categories
there exists an isomorphism $\alpha_\lambda$ of fibre functors
  \[ \alpha_\lambda(-) : H_\an(-,k(t)_\lambda)\to H_\lambda(-). \]
Also it follows that if $M$ is a $t$\dash motif with
Tannakian fundamental group $G$, then the image of the 
$\lambda$\dash adic Galois representation is contained in
$G(k(t)_\lambda)$, via the identification $\alpha_\lambda(M)$.
  
We have the following fundamental result (\cite{Taguchi95},
\cite{Tamagawa94}; see also \cite[\S 19]{Stalder07}):
\begin{theorem}[``Tate conjecture'']
 Assume $K$ is finitely generated and let $\lambda$
 be a monic irreducible element of $k[t]$, coprime with the
 kernel of $k[t]\to K$. Then for all
 $M_1,M_2\in\tM^\circ(K)$ the natural homomorphism
  \[ \Hom(M_1,M_2)\otimes_{k(t)} k(t)_\lambda \to
     \Hom_{k(t)_\lambda[G_K]}(H_\lambda(M_1),H_\lambda(M_2)) \]
 is an isomorphism.\qed
\end{theorem}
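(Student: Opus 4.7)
The plan is to follow the Taguchi--Tamagawa strategy, adapted to $t$-motifs. First I would reduce to the endomorphism case: setting $M := M_1 \oplus M_2$, the statement for $(M_1,M_2)$ is the summand of the statement for $(M,M)$ cut out by the pair of orthogonal idempotent projectors in $\End(M)$. So it suffices to prove that
\[ \End(M) \otimes_{k(t)} k(t)_\lambda \to \End_{k(t)_\lambda[G_K]}(H_\lambda(M)) \]
is an isomorphism for every $M \in \tM^\circ_\antr(K)$.

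Injectivity is essentially formal. Since the Tannakian isomorphism $\alpha_\lambda$ identifies $H_\lambda(-)$ with $H_\an(-,k(t)) \otimes_{k(t)} k(t)_\lambda$, and $H_\an(-,k(t))$ is a faithful exact $k(t)$-linear tensor functor by Theorem \ref{hanfaithful}, the map $\End(M) \to \End(H_\lambda(M))$ is already injective over $k(t)$; the extension $k(t) \hookrightarrow k(t)_\lambda$ is faithfully flat, so injectivity persists after tensoring, and trivially so after passing to $G_K$-invariants on the right.

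Surjectivity is the serious part and would proceed in two stages. Stage one is to establish semisimplicity of the $\lambda$-adic Galois representation $H_\lambda(M)$ as a $k(t)_\lambda[G_K]$-module. In the style of Faltings, one reduces this to semisimplicity of the reductions $M_{K^s}/\lambda^n$ under $G_K$, which in turn is obtained from a positivity/Rosati-type input: over the analytic fibre functor one has enough structure (via the pairings supplied by duality and the Carlitz twist) to force complete reducibility. Once semisimplicity is in hand, the double commutant theorem identifies $\End_{k(t)_\lambda[G_K]}(H_\lambda(M))$ with the commutant in $\End_{k(t)_\lambda}(H_\lambda(M))$ of the image of $G_K$, so what must be shown is that this commutant is exactly $\End(M) \otimes k(t)_\lambda$.

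Stage two, the hard part, is a Faltings-style finiteness input: one must show that, up to isomorphism, there are only finitely many $t$-motifs over $K$ isogenous to $M$ with isogeny kernels bounded by a given power of $\lambda$. Granted such a finiteness, the standard argument goes through: any $G_K$-stable $k[t]_\lambda$-lattice in $H_\lambda(M)$ arises as $H_\lambda(M')$ for some $M'$ isogenous to $M$, so a $G_K$-equivariant endomorphism of $H_\lambda(M)$ preserves (up to a bounded correction) infinitely many lattices, which via the pigeonhole pushes it into the image of an honest element of $\End(M) \otimes k(t)_\lambda$. The main obstacle is precisely this finiteness statement; for Drinfeld modules Taguchi proved it via height estimates on the associated lattices in $K^\dagger$, and for general $t$-motifs one would reduce to the Drinfeld case by expressing $M$ (up to isogeny and twist by powers of $C$) as a subquotient of tensor constructions on a Drinfeld $t$-motif, which is where the hypothesis that $\lambda$ be coprime to the kernel of $k[t]\to K$ enters (to ensure good reduction at $\lambda$ after suitable extension). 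Once that finiteness is secured, the assembly of semisimplicity plus finiteness plus the double commutant yields the desired isomorphism.
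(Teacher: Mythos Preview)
The paper does not prove this theorem at all: it is stated with a terminal \qed and attributed to Taguchi and Tamagawa (with Stalder's exposition cited for reference). It is used purely as a black box input to the connectedness argument in \S\ref{concomp}. So there is no ``paper's proof'' to compare against; your proposal goes well beyond what the paper attempts.

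That said, two points about your sketch deserve comment. First, you restrict to $M\in\tM^\circ_\antr(K)$ and invoke $\alpha_\lambda$ and $\Han$ for injectivity, but the theorem is stated for all of $\tM^\circ(K)$, and the paper only defines $\alpha_\lambda$ on the analytically trivial subcategory. Injectivity is in fact simpler and works in full generality: $H_\lambda$ itself is an exact faithful $k(t)$-linear functor on all of $\tM^\circ$ (Lang's theorem gives $\dim H_\lambda(M)=\rk M$ for every $M$), so no recourse to $\Han$ is needed.

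Second, and more seriously, your Stage~two proposes to reduce the finiteness input to the Drinfeld module case by expressing an arbitrary $M$ ``as a subquotient of tensor constructions on a Drinfeld $t$-motif''. This is not known, and is not how Taguchi or Tamagawa proceed: both work directly with $t$-motives (resp.\ $\varphi$-modules), establishing the requisite finiteness and semisimplicity without any such reduction. The overall Faltings-style architecture you describe (semisimplicity plus a Shafarevich-type finiteness feeding into the lattice/commutant argument) is indeed the shape of their proofs, but the specific reduction step you propose is a genuine gap.
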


\subsection{Connected Components of $\Gamma$}

 \begin{theorem}
  $\Gamma_{K^s}$ has no finite quotients. In particular
  it is connected.
 \end{theorem}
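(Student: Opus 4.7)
The plan is to argue by contradiction: assume $\Gamma_{K^s}\twoheadrightarrow H$ is a nontrivial finite quotient, and derive a contradiction. By Tannakian duality the quotient corresponds to a tensor subcategory $\mathcal{C}\subset\tM^\circ_\antr(K^s)$ equivalent to $\mathrm{Rep}_{k(t)}(H)$; choose a tensor generator $M$.

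First I would show that every weight of $M$ vanishes. Since $H$ is finite, $\mathcal{C}$ contains only finitely many isomorphism classes of simple objects, so by Proposition \ref{pureprop} the set of weights appearing in objects of $\mathcal{C}$ is finite. But $M^{\otimes n}\in\mathcal{C}$ for every $n$, and its weights are the $n$-fold sumsets of the weights of $M$; for those sumsets to remain bounded, all weights of $M$ must equal $0$.

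Next I would descend to a finitely generated model $M_L\in\tM^\circ_\antr(L)$ with $L\subset K^s$ finitely generated over $k$, enlarging $L$ so that all $\sigma$-morphisms between iterated tensor powers and duals of $M$ are already defined over $L^s$; then the Tannakian group of $\langle M_L\otimes L^s\rangle^\otimes\subset\tM^\circ_\antr(L^s)$ is still $H$. Applying Theorem \ref{arithgeom} to the Tannakian subcategory generated by $M_L$ over $L$ gives an exact sequence
\[
 0 \longrightarrow H \longrightarrow \Gamma_{M_L/L} \longrightarrow G^{\cst}_{M_L} \longrightarrow 0,
\]
where $G^{\cst}_{M_L}$ is the Artin quotient. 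The Tate conjecture of \S5.1 identifies $\Gamma_{M_L/L}(k(t)_\lambda)$ with the Zariski closure of the image of $G_L$ inside $\GL(H_\lambda M_L)$ for any $\lambda$ coprime to the kernel of $k[t]\to L$. The final step would then exploit weight zero via a purity/Chebotarev argument: at a place $v$ of good reduction, the characteristic polynomial of $\mathrm{Frob}_v$ on $H_\lambda M_L$ is independent of $\lambda$ with coefficients in $k(t)$, and because every weight of $M_L$ is $0$ its roots are of $t^{-1}$-adic valuation $0$, hence roots of unity in $\overline{k}$. Symmetric functions of such roots lie in $\overline{k}\cap k(t)=k$, leaving only finitely many possible Frobenius characteristic polynomials; combined with Chebotarev density this forces the image of $G_L$ to be finite, so by Tate $\Gamma_{M_L/L}$ is finite. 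Since the image of $G_L$ in $\Gamma_{M_L/L}$ is Zariski dense and factors through $G^{\cst}_{M_L}$ by definition, the surjection $\Gamma_{M_L/L}\twoheadrightarrow G^{\cst}_{M_L}$ must be an isomorphism, whence $H=1$.

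The hard part will be the purity statement: translating the weight of $M_L$, defined at $t=\infty$ in terms of the Dieudonn\'e module, into the required condition on Frobenius eigenvalues at finite places of $L$ is the $t$-motivic analogue of the Weil conjectures, and I would extract it from \cite{Pink06}, \cite{Stalder07}, or \cite{Taguchi95}.
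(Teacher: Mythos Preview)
Your strategy shares the decisive move with the paper --- descend $M$ to a finitely generated $L\subset K^s$ and invoke the Tate conjecture --- but you then take a long detour through weights and purity where the paper makes a one-line observation. Since $H$ is a \emph{finite} group scheme over $k(t)$, the set $H(k(t)_\lambda)$ is finite; and the Tannakian formalism (as recorded just before the Tate conjecture in \S5.1) already tells you that the $\lambda$-adic Galois representation of $G_L$ on $H_\lambda(M_L)$ lands inside $H(k(t)_\lambda)$ via $\alpha_\lambda$. Hence the image is finite outright --- no weights, no Frobenius eigenvalues, no Chebotarev. The representation then trivialises over some finite $L'/L$, the Tate conjecture gives $M_{L'}\approx n\1$, so $M$ is an Artin $t$-motif and therefore trivial over $K^s$, whence $H=1$. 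That is the whole proof.

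Beyond being unnecessarily elaborate, your argument has two genuine soft spots. First, even granting the purity input (relating the Dieudonn\'e weight at $t=\infty$ to the $t^{-1}$-adic valuations of Frobenius eigenvalues at finite places of $L$ is a real theorem, and you should check carefully that it is available in the cited references for arbitrary pure $t$-motifs, not just Drinfeld modules), the step ``finitely many Frobenius characteristic polynomials plus Chebotarev forces the image of $G_L$ to be finite'' is not valid as stated: continuity only yields that every element of the closure has characteristic polynomial with coefficients in $k$, and any unipotent subgroup of $\GL_n$ already enjoys that property. You would need extra input such as semisimplicity of $H_\lambda(M_L)$, which is itself nontrivial. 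Second, your final sentence is confused: the $\lambda$-adic image of $G_L$ in $\Gamma_{M_L/L}$ does \emph{not} factor through the Artin quotient $G^{\cst}_{M_L}$ ``by definition'' --- that is exactly the statement you are trying to establish. Your weight-zero argument in the opening paragraph is correct and rather pretty, but, as the paper shows, entirely unneeded here.
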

 \begin{proof}
  Note that $\Gamma\to\pi_0(\Gamma)$ is a pro-finite
  \'{e}tale quotient, hence the second statement indeed follows
  from the first. 
  
  Let $G$ be a finite quotient of $\Gamma_{K^s}$. 
  To
  a faithful representation (say, the regular representation)
  of $G$ corresponds an analytically trivial
  $t$\dash motif $M$ over $K^s$. It suffices to show that $M$
  is constant, for then it is trivial (over $K^s$) and consequently
  $G$ is trivial.
  
  Now, $M$ is defined over some finitely generated $L\subset K^s$, 
  hence the map $\Gamma_{K^s}\to G$ factors as
   \[  \Gamma_{K^s} \to \Gamma_L \to G. \]
  Since $G(k_\lambda)$ is finite, the $\lambda$\dash adic
  representation of $G_L$ associated with
  $M_L$ becomes trivial over some finite extension $L'/L$ inside
  $K^s$. Thus by the Tate conjecture, $M_{L'}$ is trivial,
  hence $M$ was constant.
\end{proof}

\bibliographystyle{plain}
\bibliography{../../master}

\def\cprime{$'$} \def\cprime{$'$} \def\cprime{$'$} \def\cprime{$'$}
  \def\cprime{$'$}
\begin{thebibliography}{10}

\bibitem{Anderson86}
Greg~W. Anderson.
\newblock $t$-motives.
\newblock {\em Duke Math. J.}, 53(2):457--502, 1986.

\bibitem{Anderson90}
Greg~W. Anderson and Dinesh~S. Thakur.
\newblock Tensor powers of the {C}arlitz module and zeta values.
\newblock {\em Ann. of Math. (2)}, 132(1):159--191, 1990.

\bibitem{Andre04}
Yves Andr{\'e}.
\newblock {\em Une introduction aux motifs (motifs purs, motifs mixtes,
  p\'eriodes)}, volume~17 of {\em Panoramas et Synth\`eses [Panoramas and
  Syntheses]}.
\newblock Soci\'et\'e Math\'ematique de France, Paris, 2004.

\bibitem{Deligne82a}
Pierre Deligne and J.S. Milne.
\newblock {Tannakian categories.}
\newblock {Hodge cycles, motives, and Shimura varieties, Lect. Notes Math. 900,
  101-228 (1982).}, 1982.

\bibitem{Dieudonne57}
Jean Dieudonn{\'e}.
\newblock Groupes de {L}ie et hyperalg\`ebres de {L}ie sur un corps de
  caract\'eristique {$p>0$}. {VII}.
\newblock {\em Math. Ann.}, 134:114--133, 1957.

\bibitem{Lang56}
Serge Lang.
\newblock Algebraic groups over finite fields.
\newblock {\em Amer. J. Math.}, 78:555--563, 1956.

\bibitem{Laumon96}
G{\'e}rard Laumon.
\newblock {\em Cohomology of {D}rinfeld modular varieties. {P}art {I}},
  volume~41 of {\em Cambridge Studies in Advanced Mathematics}.
\newblock Cambridge University Press, Cambridge, 1996.

\bibitem{Papanikolas08}
Matthew~A. Papanikolas.
\newblock Tannakian duality for {A}nderson-{D}rinfeld motives and algebraic
  independence of {C}arlitz logarithms.
\newblock {\em Invent. Math.}, 171(1):123--174, 2008.

\bibitem{Pink06}
Richard Pink and Matthias Traulsen.
\newblock The isogeny conjecture for {$t$}-motives associated to direct sums of
  {D}rinfeld modules.
\newblock {\em J. Number Theory}, 117(2):355--375, 2006.

\bibitem{Stalder07}
N.~R. Stalder.
\newblock {\em Algebraic Monodromy Groups of $A$-Motives}.
\newblock PhD thesis, ETH Z\"urich, 2007.

\bibitem{Taguchi95}
Yuichiro Taguchi.
\newblock The {T}ate conjecture for $t$-motifs.
\newblock {\em Proceedings of the AMS}, 123(11):3285--3287, 1995.

\bibitem{Tamagawa94}
A.~Tamagawa.
\newblock The tate conjecture for a-premotives.
\newblock {\em preprint}, 1994.

\bibitem{Tate71}
John Tate.
\newblock Rigid analytic spaces.
\newblock {\em Invent. Math.}, 12:257--289, 1971.

\end{thebibliography}

\end{document}